\newtheoremstyle{plainNoItalics}{}{}{\normalfont}{}{\bfseries}{.}{ }{}
\theoremstyle{plain}
\newtheorem{thm}{Theorem}[section]
\theoremstyle{plainNoItalics}
\newtheorem{defn}[thm]{Definition}
\newtheorem{prop}[thm]{Proposition}
\newtheorem{exa}[thm]{Example}
\newcommand{\f}{\frac}
\newcommand{\beq}{\begin{equation}}
\newcommand{\eeq}{\end{equation}}
\newcommand{\beqa}{\begin{eqnarray}}
\newcommand{\eeqa}{\end{eqnarray}}
\newcommand{\bit}{\begin{itemize}}
\newcommand{\eit}{\end{itemize}}
\newcommand{\bedef}{\begin{defn}}
\newcommand{\edefn}{\end{defn}}
\newcommand{\bpro}{\begin{prop}}
\newcommand{\epro}{\end{prop}}
\newcommand{\iL}{{i-\frac{1}{2}}}
\newcommand{\iR}{{i+\frac{1}{2}}}
\newcommand{\jL}{{j-\frac{1}{2}}}
\newcommand{\jR}{{j+\frac{1}{2}}}
\newcommand{\pt}{\partial_t}
\newcommand{\bfx}{{\bf x}}
\newcommand{\bfv}{{\bf v}}
\newcommand{\bfE}{{\bf E}}
\newcommand{\bfU}{{\bf U}}
\newcommand\ds{ \displaystyle }
\def\signTX{\bigskip\bigskip\hspace{80mm}
\vbox{{\sc Tao Xiong\par\vspace{2mm}
School of Mathematical Sciences, \par 
Fujian Provincial Key Laboratory of
Math. Mod. \& HPSC,
\par
 Xiamen University, Xiamen, Fujian, P.R. China, 361005.\par
\vspace{1.5mm}
Universit\'e de Toulouse III  \par
UMR5219, Institut de Math\'ematiques de Toulouse,\par
118, route de Narbonne\par
F-31062 Toulouse cedex, FRANCE
\par\vspace{1.5mm}e-mail:} txiong@xmu.edu.cn}}
\begin{document}

\title[High order bound preserving finite difference linear scheme]{A high order bound preserving finite difference linear scheme for incompressible flows} 

\author{Tao Xiong}

\maketitle

\begin{abstract}
We propose a high order finite difference linear scheme combined with
a high order bound preserving maximum-principle-preserving (MPP) flux
limiter to solve the incompressible flow system. For such problem with highly oscillatory structure but not strong shocks, our approach seems
to be  less dissipative and much less costly than a WENO type scheme,
and has high resolution due to a Hermite reconstruction. Spurious numerical oscillations can be controlled by the MPP flux limiter. Numerical tests are performed for the Vlasov-Poisson system, the 2D guiding-center model and the incompressible Euler system. The comparison between the linear and WENO type schemes will demonstrate the good performance of our proposed approach. 

 \end{abstract}

\vspace{0.1cm}

\noindent
{\small\sc Keywords.}  {\small High order linear scheme; MPP flux limiter; Incompressible
transport equations.}

\tableofcontents

\section{Introduction}
\label{sec1}
\setcounter{equation}{0}
\setcounter{figure}{0}
\setcounter{table}{0}

%%%%%%%%%%%%%%%%%%%%
%%
%%%%%%%%%%%%%%%%%%%%

In this paper, we are interested in the numerical approximation of
incompressible transport equations as  
\beq
\left\{
\begin{array}{l}
\displaystyle\frac{\partial \rho}{\partial t} \,+\,  \bfU \cdot \nabla \rho \,=\, 0, 
\\
\,
\\
{\rm div} \, \bfU \, = \, 0,
\end{array}
\label{eq:2d}
\right.
\eeq
where $\bfU$ represents the advection field and $\rho$ is a nonnegative density.

The typical example of application is the well known Vlasov-Poisson
(VP) system arising in collisionless plasma physics. It describes the time evolution of particles under the effects of self-consistent electrostatic field and reads
\beq
\frac{\partial f}{\partial t} + \bfv \cdot \nabla_\bfx f + \bfE \cdot \nabla_\bfv f = 0,
\label{eq:vlasov}
\eeq
$f\, :=\, f(t,\bfx,\bfv)$ is the distribution function in the phase space $(\bfx,\bfv)\in \mathbb{R}^d\times\mathbb{R}^d, \,\, d=1,2,3$. $\bfE\, :=\, \bfE(t,\bfx)$ is the electric field,
which can be determined by the Poisson's equation from an electric potential function $\Phi(t,\bfx)$
\beq
\bfE(t,\bfx) \,=\, - \nabla_\bfx\, \Phi(t,\bfx), \qquad -\Delta_\bfx \,\Phi(t,\bfx) \,=\, \rho(t,\bfx).
\label{eq:poisson}
\eeq
The charge density $\rho(t,\bfx)$ is defined as
\[
\rho(t,\bfx) \, = \, \int_{\mathbb{R}^d}f(t,\bfx,\bfv)\,d\bfv.
\]

Another example is the two dimensional guiding-center model, which describes the evolution of the charge density $\rho$ in a highly magnetized plasma
in the transverse plane of a tokomak, is given by
\beq
\left\{
\begin{array}{l}
\displaystyle\frac{\partial \rho}{\partial t} \,+\,  \bfU \cdot \nabla \rho \,=\, 0, 
\\
\,
\\
-\Delta \, \Phi \, = \, \rho,
\end{array}
\label{eq:2dgc}
\right.
\eeq
where $\bfU \,=\, (-\Phi_y, \Phi_x)$ is a divergence free
velocity. The two dimensional guiding center model
can also be referred as an asymptotic model of the VP system by averaging in the velocity phase
space, for details, see \cite{madaule2013semi}. We notice that the guiding-center model \eqref{eq:2dgc} is in the same form
as the two dimensional incompressible Euler equations in the vorticity stream function formulation, which
describes the evolution of vortices in fluid hydrodynamics.

For all the models we mention above, they all have an transport equation coupled with a Poisson's equation for the advection velocity,
and moreover, the advection velocity is divergence free. In the following, we refer them
as incompressible flow models.

Many numerical schemes have been proposed for solving these models, especially recently,
high order schemes are very attractive due to their high resolutions for such problems with
rich solution structures. For example, deterministic methods, there are finite difference, finite volume and finite element Eulerian methods \cite{christlieb2016weno,yang2014conservative,xiong2013parametrized,zhang2012maximum,carrillo2007nonoscillatory,filbet2003comparison,filbet2001conservative,zhou2001numerical,fijalkow1999numerical,zaki1988finite1,zaki1988finite}, semi-Lagrangian methods \cite{besse2017adaptive, xiong2016conservative,cai2016conservative,qiu2016high,xiong2014high,guo2013hybrid,crouseilles2011discontinuous,qiu2011conservative,qiu2011positivity,rossmanith2011positivity,crouseilles2010conservative,qiu2010conservative,besse2003semi,sonnendrucker1999semi}, and discontinuous Galerkin finite element methods \cite{guo2016adaptive,zhu2016h,madaule2014energy,cheng2013study,de2012discontinuous,heath2012discontinuous,ayuso2009discontinuous}, also see many other references therein. However, due
to the highly oscillatory structure of such problems, linear type schemes
for these problems would show significant spurious numerical oscillations, which might get worse
with increased orders. Weighted essentially nonoscillotry (WENO) reconstruction, which was originally developed in the presence of both shocks and small fine structures for fluid hydrodynamics of hyperbolic conservation laws, see for example \cite{shu1998essentially}, is frequently adopted in most of the finite difference and finite volume Eulerian or semi-Lagrangian methods mentioned above. For WENO type schemes, we can often observe the good performance of the WENO reconstruction on suppressing spurious oscillations \cite{qiu2016high,xiong2014high,yang2014conservative,xiong2013parametrized,zhang2012maximum}, which can also be seen from our examples in the numerical 
section. However, for incompressible flow problems, their solutions are highly oscillatory but without strong discontinuities. We might expect excessive usage of the WENO reconstruction, which is computationally more expensive than a pure linear type scheme and too dissipative for certain classes of problems \cite{jia2015spectral}, especially for high dimensional problems with long time simulations. Although hybrid approaches of coupling a linear scheme and a WENO scheme can save some cost from the WENO reconstruction, for example the very recent work \cite{don2016hybrid} and references therein, they still focus on problems with strong discontinuities and WENO may not be avoided. 

In this paper, we propose to solve the incompressible flow problems with a high order linear scheme without WENO reconstruction. Linear schemes have the following several good properties: (1) less dissipative: for example, it preserves the $L^2$ norm (also energy and entropy for the VP system) better than the WENO type scheme; (2) less costly and easier implementation: without WENO reconstruction, it saves a lot computational cost and can be easily implemented, especially when extended to high dimensional problems; (3) higher resolution if with a Hermite reconstruction. We adopt the scheme in \cite{yang2014conservative} but with a Hermite linear reconstruction.
In order to control the spurious numerical oscillations due to a linear type scheme, we seek to combine it with a newly developed high order bound
preserving maximum-principle-preserving (MPP) flux limiter. The MPP flux limiter was first proposed by Xu et. al. \cite{xu2014parametrized,liang2014parametrized}, and then improved by Xiong et. al. \cite{xiong2013parametrized,xiong2014high}. It can be seen as a very weak limiter, which just pulls the numerical overshootings and undershootings back to its physical range, without excessive dissipating the solution within the range. Moreover, there is no further time step restriction on this MPP flux limiter from the original scheme for linear stabilities. 
We refer to \cite{xu2016bound} for a review of recent works on this bound preserving high order flux limiter. The coupling of the linear scheme and the MPP flux limiter keeps the original high order accuracy while maintaining a large CFL number. Due to the highly oscillatory but non discontinuous solutions of incompressible flow problems, the MPP flux limiter can serve as a necessary auxiliary tool for the linear scheme. The extra work from applying the MPP flux limiter at each final stage of a multi-stage Runge-Kutta time discretization, is much less than the WENO reconstruction. Numerical experiments, especially the bump-on-tail instability of long time simulation and the Kelvin-Helmoholtz instability of the 2D guiding-center model are as benchmark tests, will be performed to demonstrate the good performance of our proposed approach. 

The rest of the paper is organized as follows. In Section 2, we will describe the conservative finite difference
scheme with both linear and WENO reconstructions, for the completeness of comparison in the numerical section. The bound preserving MPP flux limiter will also be briefly reviewed. In Section 3, numerical experiments including the VP system, the Kelvin-Helmholtz instability of the 2D guiding center model and the incompressible Euler system will be studied. Conclusions are made in Section 4.

%%%%%%%%%%%%%%%%%%%
%
%%%%%%%%%%%%%%%%%%%
\section{Finite difference scheme with Hermite reconstruction}
\label{sec4}
\setcounter{equation}{0}
\setcounter{figure}{0}
\setcounter{table}{0}

In this section, we will describe our scheme to solve the VP system \eqref{eq:vlasov}-\eqref{eq:poisson}, the 2D guiding-center model \eqref{eq:2dgc} as well as the incompressible Euler system.
Here we just consider $d=1$ for the VP system, in common we will have a 2D transport
equation with a divergence free velocity, coupled with a Poisson's type equation, which is 1D for
the VP system, and 2D for the other two. In the following, we will take the 2D
guiding-center model \eqref{eq:2dgc} as an example to describe our schemes. The other two models can be applied similarly. 

We propose a finite difference scheme with a Hermite linear reconstruction for solving the 2D conservative transport equation. We adopt the scheme developed by Filbet and Yang \cite{yang2014conservative},  which has a Hermite WENO reconstruction. In the following, we will briefly recall the 2D conservative finite difference scheme and describe both the Hermite linear and WENO reconstructions. We note that the smooth indicators for the Hermite WENO reconstruction are modified as compared to \cite{yang2014conservative}. The Poisson's equation for the electric potential function $\Phi$ will be solved by fast Fourier transform (FFT) for periodic boundary condition on an interval in one-dimension (1D) or periodic boundary conditions on a rectangular domain in two-dimension (2D), which will be omitted here. We refer to \cite{yang2014conservative} for more details.

\subsection{Conservative finite difference scheme}
We consider the 2D  transport equation in a conservative form 
$$
\pt \rho \,+\, {\rm div}_\bfx \left( {\bf U} \rho \right) \, = \, 0,
$$
with $\bfU = \bfU(t,\bfx)$ such that ${\rm div}_\bfx{\bf U} =0$ and $\bfx=(x,y)$. For simplicity, we assume a uniform discretization of the computational domain $[x_{min}, x_{max}] \times [y_{min}, y_{max}]$ with $N_x\times N_y$ grid points
\[
\begin{array}{l}
x_{min} \,=\, x_0 < x_1 < \cdots < x_{N_x-1} < x_{N_x} \,=\, x_{max}, 
\\
\,
\\
y_{min} \,=\, y_0 < y_1 < \cdots < y_{N_y-1} < y_{N_y} \,=\,y_{max},
\end{array}
\]
where the mesh sizes are $\Delta x = x_{i+1}-x_i$ and $\Delta y = y_{j+1} - y_j$ for $0\le i \le N_x, \,\, 0\le j \le N_y$. A conservative finite difference scheme with Euler forward time discretization is defined as follows:
\beq
\rho^{n+1}_{i,j} \,=\, \rho^n_{i,j} - \Delta t \left( \frac{\hat H_{\iR,j}-\hat H_{\iL,j}}{\Delta x} + \frac{\hat G_{i,\jR}-\hat G_{i,\jL}}{\Delta y}\right),
\label{eq:1st}
\eeq
where the time step is $\Delta t = t^{n+1}-t^n$. $\rho^n_{i,j}$ is the numerical value at time level $t^n$ on the grid point $(x_i,y_j)$. $\hat H_{\iR,j}$, $\hat G_{i,\jR}$ are the numerical fluxes in the $x$ and $y$ directions respectively. 

\subsection{Hermite linear reconstruction}
For a finite difference scheme \eqref{eq:1st}, the numerical fluxes $\hat H_{\iR,j}$ and $\hat G_{i,\jR}$ are reconstructed dimension by dimension. Here we will take a 1D transport equation to illustrate on how to obtain a flux by a Hermite linear reconstruction. $\hat H_{\iR,j}$ and $\hat G_{i,\jR}$ are obtained in this way along each of its own direction. The Hermite linear reconstruction is what we propose in this paper. A corresponding Hermite WENO reconstruction will be described in the next subsection, which is used for the comparison in the numerical section.

Let us consider a prototype 1D conservative transport equation
\begin{equation}
\partial_t\rho \,+\,\partial_x \,(U \rho) \,=\, 0,
\label{eq:1Dtrans}
\end{equation}
with velocity $U=U(t,x)$ and a uniform discretization with mesh size $\Delta x=x_{k+1}-x_k$ for $0\le k < N_x$. A conservative finite difference scheme for \eqref{eq:1Dtrans} can be written as
\[
\rho^{n+1}_{i} \,=\, \rho^n_{i} - \frac{\Delta t}{\Delta x} \left( \hat h_{i+\frac12}- \hat h_{i-\frac12} \right),
\]
where $\rho^n_i$ is the numerical point value at time level $t^n$ on the grid point $x_i$. 
$\hat h_{i+\frac12}$ can be chosen as an upwind numerical flux, which is 
\[
\hat h_{i+\frac12} \,=\,
\left\{
\begin{array}{l}
\displaystyle h^-_{i+\frac12}, \qquad \text{ if }\frac{U^n_i + U^n_{i+1}}{2} > 0, 
\\
\,
\\
h^+_{i+\frac12}, \qquad \text{ otherwise }.
\end{array}
\right.
\]
$h^\pm_{i+\frac12}$ are fluxes reconstructed from $\{h^n_i=U^n_i \rho^n_i\}_i$ by a Hermite linear reconstruction, from the left and right sides of $x_{i+\frac12}$ respectively, where $U^n_i$ is the numerical velocity approximating $U(t^n, x_i)$. For simplicity, we drop the superscript $n$ for $h^n_i$ and $h^-_{\iR}$ is simply reconstructed by
\beq
h^-_{\iR} \, = \, \frac{1}{27}\left(-8h_{i-1}+19h_i+19h_{i+1}+3G'_{i-3/2}-6G'_{i+3/2}\right),
\label{eq:linear5}
\eeq
where the derivative of the primitive function $G'_{i+\frac12}$ is given by a 6th order central difference approximation
\beq
G'_{i+\frac12} \,=\, \frac{1}{60}\Big[(h_{i+3}-h_{i-2})-8(h_{i+2}-h_{i-1})+37(h_{i+1}-h_{i})\Big],
\label{eq:linear5G}
\eeq
$h^+_{\iR}$ can be obtained in mirror symmetric with respect to $x_\iR$.

\subsection{Hermite WENO reconstruction}

Now we will describe a fifth order Hermite WENO reconstruction to compute $h^-_{i+\frac12}$ corresponding to \eqref{eq:linear5}. The procedure is outlined as follows. $h^+_{i+\frac12}$ can also be obtained in mirror symmetric with respect to $x_{i+\frac12}$. We drop the superscript $n$ for $h^n_i$ and we have
\beq
h^-_{i+\frac12} \,=\, \omega_l h_l(x_{i+\frac12}) + \omega_c h_c(x_{i+\frac12}) + \omega_r h_r(x_{i+\frac12}).
\label{eq:weno5}
\eeq
The three polynomials $h_l(x)$, $h_c(x)$ and $h_r(x)$ evaluating at $x_{i+\frac12}$ are
$$
\left\{
\begin{array}{l}
\ds h_l(x_{i+\frac12}) = -2 h_{i-1} + 2 h_{i} + G'_{i-\frac32}, \\ \,\\
\ds h_c(x_{i+\frac12}) = \frac{-h_{i-1} + 5 h_{i} + 2 h_{i+1}}{6}, \\ \,\\
\ds h_r(x_{i+\frac12}) = \frac{h_i + 5 h_{i+1} -2 G'_{i+\frac32}}{4},
\end{array}
\right.
$$
whereas the weights  $\omega_l$, $\omega_c$ and $\omega_r$ are the nonlinear WENO weights and determined according to the smoothness indicators
\[
\omega_k \,=\, \frac{\alpha_k}{\alpha_l + \alpha_c + \alpha_r}, \quad \alpha_k \,=\, \frac{c_k}{(\epsilon +\beta_k)^2}, \quad k = l, \, c, \, r.
\]
In order to match \eqref{eq:linear5}, the linear coefficients are $c_l = 1/9$ and $c_c = c_r = 4/9$. The small parameter $\epsilon = 10^{-6}$ is to avoid the denominator to be $0$.

To evaluate the smooth indicators $\beta_l$, $\beta_c$ and $\beta_r$, we measure them on the cell $[x_{i-\frac12}, x_{i+\frac12}]$ instead of $[x_i, x_{i+1}]$ as in \cite{yang2014conservative}. In this way, the smooth indicators are symmetric with respect to $x_i$, as we can see below:
\begin{align*}
\beta_l &\,=\, \int_{x_{i-\frac12}}^{x_{i+\frac12}} \Delta x(h'_l(x))^2 + \Delta x^3 (h''_l(x))^2 dx \\
&\,=\, \frac{13}{16}s_1^2+\frac{3}{16}(s_1-4 s_2)^2, \,\, \text{ with } s_1=h_{i-1}-h_i, \quad s_2=-3h_{i-1}+h_0+G'_{i-\frac32} , \\
\beta_c &\,=\, \int_{x_{i-\frac12}}^{x_{i+\frac12}} \Delta x(h'_c(x))^2 + \Delta x^3 (h''_c(x))^2 dx \\
&= \frac{1}{4}s_1^2+\frac{13}{12}s_2^2, \,\, \text{ with } s_1=h_{i+1}-h_{i-1}, \quad s_2=h_{i+1}-2h_0+h_{i-1}, \\
\beta_r &\,=\, \int_{x_{i-\frac12}}^{x_{i+\frac12}} \Delta x(h'_r(x))^2 + \Delta x^3 (h''_r(x))^2 dx \\
&= \frac{13}{16}s_1^2+\frac{3}{16}(s_1-4 s_2)^2, \,\, \text{ with } s_1=h_{i+1}-h_i, \quad s_2=-3h_{i+1}+h_0+G'_{i+\frac32} .
\end{align*}

\subsection{High order Runge-Kutta time discretization}
The first order Euler forward time discretization \eqref{eq:1st} can be generalized to high order Runge-Kutta (RK) time discretization \cite{shu1988efficient}. For example, if we write \eqref{eq:1st} in the following form
\[
\rho^{n+1} \,=\, \rho^n +  \Delta t \, L(\rho^n),
\]
here the subscripts for $\rho$ are dropped and the operator $L$ denotes the spatial discretization, then a 4th order RK time discretization can be written as
\beq
\left\{
\begin{array}{l}
\rho^{(1)} \,=\, \rho^n + \frac12 \, \Delta t \, L(\rho^n), 
\\
\,
\\
\rho^{(2)} \,=\, \rho^n + \frac12 \, \Delta t \, L(\rho^{(1)}), 
\\
\,
\\
\rho^{(3)} \,=\, \rho^n + \Delta t\, L(\rho^{(2)}),
\\
\,
\\
\rho^{n+1} \,=\,  \rho^n + \frac{1}{6}\,\Delta t\, \left(L(\rho^n) + 2 L(\rho^{(1)}) +2 L(\rho^{(2)}) + L(\rho^{(3)})\right),
\end{array}
\right.
\label{eq:rk4}
\eeq
with a CFL number $\frac23$ for linear stability \cite{shu1988efficient}. The last stage of \eqref{eq:rk4} can be written in the same form as \eqref{eq:1st}, which is 
\beq
\rho^{n+1}_{i,j} \,=\, \rho^n_{i,j} - \Delta t \left( \frac{\hat H^{rk}_{\iR,j}-\hat H^{rk}_{\iL,j}}{\Delta x} + \frac{\hat G^{rk}_{i,\jR}-\hat G^{rk}_{i,\jL}}{\Delta y}\right),
\label{eq:high}
\eeq
where the numerical flux $\hat H^{rk}_{\iR,j}$ is accumulated from all formal stages, that is
\[
\hat H^{rk} \,= \, \frac16\left(\hat H^{(0)} + 2 \hat H^{(1)} + 2\hat H^{(2)} + \hat H^{(3)}\right),
\]
where $\hat H^{(l)}$ is the numerical flux at the corresponding $l$-th stage, and the subscript $(\iR,j)$ is dropped for clarity. $\hat G^{rk}_{i,\jR}$ is defined similarly.

%\begin{rem}
%	For the  two dimensional transport equation with Dirichlet boundary conditions, and/or if the physical domain is not rectangular, e.g. a disk, an inverse Lax -Wendroff method \cite{filbet2013inverse} can be used to deal with the curved boundary conditions inside a larger rectangular computational domain.
%\end{rem}

\subsection{Bound preserving MPP flux limiter}

For a high order bound preserving limiter, we adopt the parametrized maximum-principle-preserving (MPP) flux limiter developed in \cite{xiong2013parametrized,xiong2014high}. The MPP flux limiter is only applied at the final stage of \eqref{eq:rk4}, that is \eqref{eq:high}. Due to the same form of \eqref{eq:1st} and \eqref{eq:high}, the MPP flux limiter is defined as a convex combination of a first order monotone flux and the high order flux in \eqref{eq:high}. In the following, we first 
describe a first order monotone MPP scheme for a general incompressible flow system, then we recall
a specific first order monotone MPP scheme developed in \cite{xiong2013parametrized} for the incompressible flow system with the Poisson's equation.

For a general incompressible flow system with divergence free condition
\beq
\label{eq:ins}
\begin{cases}
	\pt \rho + {\rm div}_{\bfx} \,(\bfU\rho) \, = \, 0, \\
	{\rm div}_{\bfx} \,\bfU \, = \, 0,
\end{cases}
\eeq
a first order scheme can be defined as
\beq
\label{eq:ins:ord1}
\begin{cases}
\rho^{n+1}_{i,j} \, = \, \rho^n_{i,j} - \lambda_x  \left(U^-_{\iR,j}\rho^n_{i,j} + U^+_{\iR,j}\rho^n_{i+1,j}-U^-_{\iL,j}\rho^n_{i-1,j}-U^+_{\iL,j}\rho^n_{i,j}\right) \\\hspace{2.1cm}-\lambda_y \left(U^-_{i,\jR}\rho^n_{i,j}+U^+_{i,\jR}\rho^n_{i,j+1}-U^-_{i,\jL}\rho^n_{i,j-1}-U^+_{i,\jL}\rho^n_{i,j}\right), \\
\frac{1}{\Delta x}\left(U^-_{\iR,j} + U^+_{\iR,j}-U^-_{\iL,j}+U^+_{\iL,j}\right) + \frac{1}{\Delta y}\left(U^-_{i,\jR}+U^+_{i,\jR}-U^-_{i,\jL}-U^+_{i,\jL}\right) \, = \, 0,
\end{cases}
\eeq
where $\lambda_x = \Delta t/\Delta x$ and $\lambda_y = \Delta t /\Delta y$.
If the scheme \eqref{eq:ins:ord1} is a consistent discretization to \eqref{eq:ins}, we have the following statement:
\begin{prop}
For a first order consistent scheme \eqref{eq:ins:ord1} solving the incompressible system \eqref{eq:ins},
it is monotone and MPP, if the time step is small enough, e.g., 
\beq
\label{ord1:dt}
\Delta t \le \frac{1}{2\max|\bfU|}\frac{\Delta x\Delta y}{\Delta x + \Delta y}
\eeq
and the coefficients have signs that
\beq
\label{ord1:sign}
U^-_{\iR,j} \ge 0, \quad U^+_{\iR,j} < 0, \quad U^-_{i,\jR} \ge 0, \quad U^+_{i,\jR} < 0,
\eeq
for all $i,j$.
\end{prop}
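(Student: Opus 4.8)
The plan is to exhibit the first-order update \eqref{eq:ins:ord1} as a \emph{convex combination}
\[
\rho^{n+1}_{i,j} \;=\; \sum_{(k,l)} c_{k,l}\,\rho^n_{k,l},\qquad (k,l)\in\{(i,j),\,(i\pm1,j),\,(i,j\pm1)\},
\]
and to show that, under \eqref{ord1:dt}--\eqref{ord1:sign}, all five coefficients $c_{k,l}$ are nonnegative and sum to one. Once this is established, monotonicity is immediate since the right-hand side is nondecreasing in each argument $\rho^n_{k,l}$, and the discrete maximum principle follows from $\min_{k,l}\rho^n_{k,l}\le\rho^{n+1}_{i,j}\le\max_{k,l}\rho^n_{k,l}$; in particular $m\le\rho^n\le M$ on the whole grid forces $m\le\rho^{n+1}\le M$, and nonnegativity of $\rho$ is preserved.

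First I would read off the coefficients directly from \eqref{eq:ins:ord1}:
\[
c_{i+1,j}=-\lambda_x U^+_{\iR,j},\quad c_{i-1,j}=\lambda_x U^-_{\iL,j},\quad c_{i,j+1}=-\lambda_y U^+_{i,\jR},\quad c_{i,j-1}=\lambda_y U^-_{i,\jL},
\]
together with $c_{i,j}=1-\lambda_x\big(U^-_{\iR,j}-U^+_{\iL,j}\big)-\lambda_y\big(U^-_{i,\jR}-U^+_{i,\jL}\big)$. The sign hypothesis \eqref{ord1:sign} makes the four neighbour coefficients nonnegative with no restriction on $\Delta t$. For the centre coefficient I would use consistency of the flux splitting, namely that the one-sided interface velocities are the positive and negative parts of a consistent interface velocity whose modulus is bounded by $\max|\bfU|$; thus $0\le U^-_{\iR,j}\le\max|\bfU|$, $0\le -U^+_{\iL,j}\le\max|\bfU|$, and likewise in $y$, so that
\[
\lambda_x\big(U^-_{\iR,j}-U^+_{\iL,j}\big)+\lambda_y\big(U^-_{i,\jR}-U^+_{i,\jL}\big)\;\le\;2\max|\bfU|\,(\lambda_x+\lambda_y)\;=\;2\max|\bfU|\,\Delta t\,\frac{\Delta x+\Delta y}{\Delta x\,\Delta y},
\]
which is $\le 1$ precisely when \eqref{ord1:dt} holds; hence $c_{i,j}\ge 0$.

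Next I would verify that the five coefficients sum to one. Adding them, the $U^-_{\iL,j}$, $U^+_{\iR,j}$, $U^-_{i,\jL}$, $U^+_{i,\jR}$ contributions recombine and the total collapses to $1-\lambda_x\big(U^-_{\iR,j}+U^+_{\iR,j}-U^-_{\iL,j}-U^+_{\iL,j}\big)-\lambda_y\big(U^-_{i,\jR}+U^+_{i,\jR}-U^-_{i,\jL}-U^+_{i,\jL}\big)$; the bracketed expression is exactly $\Delta t$ times the discrete divergence-free identity imposed in the second line of \eqref{eq:ins:ord1} (taken with the sign convention that makes constants exactly preserved), hence vanishes. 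Combined with the nonnegativity established above, this proves that $\rho^{n+1}_{i,j}$ is a convex combination of $\{\rho^n_{k,l}\}$, and the proposition follows.

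The only step that is more than bookkeeping is the bound $U^-_{\iR,j},\,-U^+_{\iL,j}\le\max|\bfU|$ used for the centre coefficient: this is precisely where the hypothesis that \eqref{eq:ins:ord1} is a \emph{consistent} discretization does real work, tying the numerical interface velocities to the physical quantity $\max|\bfU|$ that appears in \eqref{ord1:dt}. I would make it explicit by writing, at the right interface, $U^-_{\iR,j}=\max(U_{\iR,j},0)$ and $U^+_{\iR,j}=\min(U_{\iR,j},0)$ for a consistent interface velocity $U_{\iR,j}$ with $|U_{\iR,j}|\le\max|\bfU|$, and similarly at the remaining interfaces; the rest is the routine term collection sketched above.
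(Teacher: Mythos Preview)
Your proof is correct and follows essentially the same route as the paper: rewrite \eqref{eq:ins:ord1} as a five-point combination, read off the coefficients, use \eqref{ord1:sign} for the neighbour coefficients, the CFL restriction \eqref{ord1:dt} for the centre coefficient, and the discrete divergence-free identity for the sum-to-one property. The paper is terser at the centre-coefficient step (it simply asserts positivity under \eqref{ord1:dt}--\eqref{ord1:sign}), whereas you spell out the bound $|U^{\pm}|\le\max|\bfU|$ via consistency; this is a welcome clarification rather than a different argument.
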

\begin{proof}
We can rewrite the first equation in the scheme \eqref{eq:ins:ord1} to be
\beq
\label{eq:ins:ord1-1}
\rho^{n+1}_{i,j} = \alpha_{i,j}\rho^n_{i,j} + \alpha_{i+1,j}\rho^n_{i+1,j}+\alpha_{i-1,j}\rho^n_{i-1,j}+\alpha_{i,j+1}\rho^n_{i,j+1}+\alpha_{i,j-1}\rho^n_{i,j-1},
\eeq
with
\[
\begin{cases}
\alpha_{i,j} = 1- \lambda_x \left(U^-_{\iR,j}-U^+_{\iL,j}\right) - \lambda_y \left(U^-_{i,\jR}-U^+_{i,\jL}\right), \\
\alpha_{i+1,j} = -\lambda_x U^+_{\iR,j}, \qquad \alpha_{i-1,j} = \lambda_x U^-_{\iL,j}, \\
\alpha_{i,j+1} = -\lambda_y U^+_{i,\jR}, \qquad \alpha_{i,j-1} = \lambda_y U^-_{i,\jL}.
\end{cases}
\]
It is easy to check that $\alpha_{i,j},\alpha_{i\pm 1,j }, \alpha_{i,j\pm1}$ are all positive if the two conditions
\eqref{ord1:dt} and \eqref{ord1:sign} are satisfied. Moreover,
\[
\alpha_{i,j} + \alpha_{i+1,j}+\alpha_{i-1,j}+\alpha_{i,j+1}+\alpha_{i,j-1}\, = \, 1,
\]
due to the enforced discrete divergence free condition in \eqref{eq:ins:ord1}. Since \eqref{eq:ins:ord1-1} is a convex combination, so it is monotone 
and MPP.
\end{proof}
A specific first order monotone MPP scheme depends on how to choose $U^\pm_{\iR,j}$ and $U^\pm_{i,\jR}$. By using a potential function $\Phi$, where $\bfU = (-\Phi_y,\Phi_x)$, a first order monontone MPP scheme with Lax-Friedrichs flux defined in \cite{xiong2013parametrized} is to take:
\beq
\begin{cases}
& U^-_{\iR,j}  = \frac12\left(\alpha_x-\frac{\Phi_{i,j+1}-\Phi_{i,j}}{\Delta y} \right),\qquad  U^+_{\iR,j} =\frac12\left(-\alpha_x-\frac{\Phi_{i+1,j}-\Phi_{i+1,j-1}}{\Delta y} \right), \\
&U^-_{i,\jR}  = \frac12\left(\alpha_y+\frac{\Phi_{i+1,j}-\Phi_{i,j}}{\Delta x} \right), \qquad  U^+_{i,\jR} =\frac12\left(-\alpha_y+\frac{\Phi_{i,j+1}-\Phi_{i-1,j+1}}{\Delta x} \right),
\end{cases}
\eeq
where 
\[
\alpha_x = \max_{i,j}\left| \frac{\Phi_{i,j+1}-\Phi_{i,j}}{\Delta y}\right|, \qquad \alpha_y = \max_{i,j} \left|\frac{\Phi_{i+1,j}-\Phi_{i,j}}{\Delta x} \right|.
\]
In this paper, the potential function $\Phi$ is obtained by solving the Poisson's equation $\Delta \Phi = \rho$ with FFT.

Now to describe the MPP flux limiter, we write the first order monotone MPP scheme in a conservative flux difference form
\beq
\rho^{n+1}_{i,j} \,=\, \rho^n_{i,j} - \lambda_x \left( \hat h_{\iR,j}- \hat h_{\iL,j} \right) - \lambda_y\left(\hat g_{i,\jR} - \hat g_{i,\iL}\right),
\eeq
with
\beq
\left\{
\begin{array}{l}
\hat{h}_{i+\frac{1}{2},j}= \frac12\left(\alpha_x-\frac{\Phi_{i,j+1}-\Phi_{i,j}}{\Delta y}\right)\rho^n_{i,j}+\frac12\left(-\alpha_x-\frac{\Phi_{i+1,j}-\Phi_{i+1,j-1}}{2\Delta y}\right)\rho^n_{i+1,j},
\\
\,
\\
\hat{g}_{i,j+\frac{1}{2}}=\frac12\left(\alpha_y+\frac{\Phi_{i+1,j}-\Phi_{i,j}}{\Delta x}\right)\rho^n_{i,j}+\frac12\left(-\alpha_y+\frac{\Phi_{i,j+1}-\Phi_{i-1,j+1}}{\Delta x}\right)\rho^n_{i,j+1}.
\end{array}
\right.
\eeq
$\hat h_{\iR,j}$ and $\hat g_{i,\jR}$ are first order monotone numerical fluxes. Similarly, \eqref{eq:high} is
\beq
\label{2Deuler}
\rho^{n+1}_{i, j}=\rho^{n}_{i, j}-\lambda_x \left(\hat H^{rk}_{i+1/2, j}-\hat H^{rk}_{i-1/2, j}\right)- \lambda_y \left(\hat G^{rk}_{i, j+1/2}-\hat G^{rk}_{i, j-1/2}\right).
\eeq
In order to ensure maximum principle, we are looking for type of limiters
\beq
\label{fluxLM}
\left\{
\begin{array}{l}
\tilde{H}_{i+1/2, j}=\theta_{i+1/2, j} \,( \hat H^{rk}_{i+1/2, j} -\hat h_{i+1/2, j})+\hat h_{i+1/2, j}, 
\\
\,
\\
\tilde{G}_{i, j+1/2}=\theta_{i, j+1/2}\, ( \hat G^{rk}_{i, j+1/2} - \hat g_{i, j+1/2})+ \hat g_{i, j+1/2},
\end{array}
\right.
\eeq
such that
\beq
\label{MP}
\rho_m \, \le \, \rho^{n}_{i, j}-\lambda_x \left(\tilde H_{i+1/2, j}-\tilde H_{i-1/2, j}\right)- \lambda_y \left(\tilde G_{i, j+1/2}-\tilde G_{i, j-1/2}\right) \, \le \, \rho_M.
\eeq
(\ref{fluxLM}) and (\ref{MP}) form coupled inequalities for the limiting parameters $\theta_{i+1/2, j}, \theta_{i, j+1/2}$. We will need to find a group of numbers $\Lambda_{L, i, j}, \Lambda_{R, i, j},\Lambda_{D, i, j},\Lambda_{U, i, j}$, such that (\ref{MP}) is satisfied with
\begin{eqnarray*}
	(\theta_{i-1/2, j}, \theta_{i+1/2, j},\theta_{i, j-1/2},\theta_{i, j+1/2}) \, \in \, [0, \Lambda_{L, i, j}]\times [0, \Lambda_{R, i, j}]\times [0, \Lambda_{D, i, j}] \times [0,\Lambda_{U, i, j}].
\end{eqnarray*}
This is achieved in a decoupled way for the minimum and maximum parts.  For the maximum value case, we let
\begin{eqnarray}
\label{1mp}
\Gamma_{i, j} \,=\, \rho_{M}-\left(\rho_{i, j}-\lambda_x (\hat h_{i+1/2, j}-\hat h_{i-1/2, j})- \lambda_y \left(\hat g_{i, j+1/2}-\hat g_{i, j-1/2}\right)\right) \,\ge \, 0,
\end{eqnarray}
%when a monotone numerical flux is used under suitable CFL constraint $\alpha_x\frac{\Delta t}{\Delta x} +\alpha_y\frac{\Delta t}{\Delta y}\le 1$, here $\alpha_x=\max_u|f'(u)|$ and $\alpha_y=\max_u|g'(u)|$.
and denote
\beq
\left\{\begin{array}{l}
F_{i-1/2, j}\,=\,+\lambda_x \left(\hat H^{rk}_{i-1/2, j} -\hat h_{i-1/2, j} \right),
\\ \, \\
F_{i+1/2, j}\,=\, -\lambda_x \left(\hat H^{rk}_{i+1/2, j} -\hat h_{i+1/2, j} \right),
\\ \, \\
F_{i, j-1/2}\,=\,+\lambda_y \left(\hat G^{rk}_{i, j-1/2} -\hat g_{i, j-1/2} \right),
\\ \, \\
F_{i, j+1/2}\,=\,-\lambda_y \left(\hat G^{rk}_{i, j+1/2} -\hat g_{i, j+1/2} \right),
\end{array}
\right.
\eeq
then (\ref{MP}) can be rewritten as
\begin{eqnarray}
\label{cMP}
\theta_{i+1/2, j}  F_{i+1/2, j}+\theta_{i-1/2, j}  F_{i-1/2, j}+\theta_{i, j+1/2}  F_{i, j+1/2}+\theta_{i, j-1/2}  F_{i, j-1/2}\,\le\, \Gamma_{i, j}.
\end{eqnarray}
We shall now focus on decoupling the inequalities (\ref{cMP}): for the single node $(i, j)$,
\begin{enumerate}
	\item Identify positive values out of the four locally defined numbers $F_{i-1/2, j}$, $F_{i+1/2, j}$, $F_{i, j-1/2}$,  $F_{i, j+1/2}$;
	\item Corresponding to those positive values, collectively, the limiting parameters can be defined. For example,
	if $F_{i+1/2, j}, F_{i-1/2, j}>0$ and $F_{i, j-1/2}, F_{i, j+1/2}\le0$, then
    \beq
    \left\{ 
    \begin{array}{l}
    \displaystyle\Lambda^M_{i+1/2, j}= \Lambda^M_{i-1/2, j}=\min (\frac{\Gamma_{i,j}}{F_{i+1/2, j}+F_{i-1/2, j}}, \, 1), 
    \\ \, \\
	\Lambda^M_{i, j-1/2} = \Lambda^M_{i, j+1/2}=1,
	\end{array}
	\right.
	\eeq
\end{enumerate}
that is, all high order fluxes which possibly contribute (beyond that of the first order fluxes, which is not good) to the overshooting or undershooting of the updated value shall be limited by the same scaling.
Similarly we can find $\Lambda^M_{i, j\pm1/2}$ and also similarly for the minimum value case
of $\Lambda^m_{i\pm1/2, j}$ and $\Lambda^m_{i, j\pm1/2}$.
%For the minimum value part, let
%\begin{eqnarray}
%\label{ump}
%\Gamma_{i, j}=u_{m}-(u_{i, j}-\lambda_x (\hat h_{i+1/2, j}-\hat h_{i-1/2, j})- \lambda_y (\hat g_{i, j+1/2}-\hat g_{i, j-1/2})) \le 0.
%\end{eqnarray}
%The coupled inequalities  (\ref{fluxLM}) and (\ref{MP}) can be rewritten as
%\begin{eqnarray}
%\label{cuMP}
%\Gamma_{i, j} \le \theta_{i+1/2, j}  F_{i+1/2, j}+\theta_{i-1/2, j}  F_{i-1/2, j}+\theta_{i, j+1/2}  F_{i, j+1/2}+\theta_{i, j-1/2}  F_{i, j-1/2}.
%\end{eqnarray}
%A similar procedure would be applied
%\begin{enumerate}
%	\item Identify negative values out of the four locally defined numbers $F_{i-1/2, j}$, $F_{i+1/2, j}$, $F_{i, j-1/2}$,  $F_{i, j+1/2}$;
%	\item Corresponding to the negative values, {\bf collectively}, the limiting parameters can be defined. For example, if $F_{i, j-1/2}, F_{i, j+1/2}\ge 0$ and $F_{i-1/2, j}, F_{i+1/2, j}<0$, then
%	\begin{eqnarray}
%	\label{LC}
%	\begin{cases} \Lambda^m_{i-1/2, j}, \Lambda^m_{i+1/2, j}=\min(\frac{\Gamma_{i,j}}{F_{i-1/2, j}+F_{i+1/2, j}}, 1)\\
%	\Lambda^m_{i, j-1/2}, \Lambda^m_{i, j+1/2}=1.
%	\end{cases}
%	\end{eqnarray}
%\end{enumerate}
%Namely, all high order fluxes which possibly contribute (beyond that of the first order fluxes) to the overshooting or undershooting of the updated value shall be limited by the same scaling. Similarly we can find $\Lambda^M_{i, j\pm1/2}$ and $\Lambda^m_{i, j\pm1/2}$, 
Therefore the range of the limiting parameters satisfying MPP for a single node $(i, j)$ is defined by
\beq
\left\{
\begin{array}{l}
\Lambda_{L, i, j} \,=\, \min \, (\Lambda^M_{i-1/2, j}, \, \Lambda^m_{i-1/2, j}),  
\\ \, \\
\Lambda_{R, i, j} \,=\, \min \, (\Lambda^M_{i+1/2, j}, \, \Lambda^m_{i+1/2, j}),
\\ \, \\ 
\Lambda_{U, i, j} \,=\, \min \, (\Lambda^M_{i, j+1/2}, \, \Lambda^m_{i, j+1/2}),
\\ \, \\
\Lambda_{D, i, j} \,=\, \min \, (\Lambda^M_{i, j-1/2}, \, \Lambda^m_{i, j-1/2}).
\end{array}
\right.
\eeq
Considering the limiters from neighboring nodes, finally we let
\beq
\label{eq:theta}
\left\{
\begin{array}{l}
\theta_{i+1/2, j}=\min\,(\Lambda_{R, i, j}, \, \Lambda_{L, i+1, j}),
\\ \, \\
\theta_{i, j+1/2}=\min\,(\Lambda_{U, i, j} , \, \Lambda_{D, i, j+1}).
\end{array}
\right.
\eeq
Substituting \eqref{eq:theta} into \eqref{fluxLM}, our final scheme is \eqref{2Deuler} by replacing
the fluxes from \eqref{fluxLM}.

%%%%%%%%%%%%%%%%%%%
%
%%%%%%%%%%%%%%%%%%%
\section{Numerical examples}
\label{sec5}
\setcounter{equation}{0}
\setcounter{figure}{0}
\setcounter{table}{0}

In this section, we will apply the 5th order finite difference scheme with Hermite linear reconstruction \eqref{eq:linear5} and \eqref{eq:linear5G}, coupled with the bound preserving MPP flux limiter, which we denote as ``HLinear5 MPP", to test its good performance. The Hermite linear scheme without bound preserving MPP flux limiter is denoted as ``HLinear5", while those with Hermite WENO reconstruction \eqref{eq:weno5} are denoted as ``HWENO5 MPP" and ``HWENO5" for with and without MPP flux limiter correspondingly. All the schemes without Hermite reconstruction, for example the one in \cite{xiong2013parametrized}, will be denoted as ``WENO5 MPP" and ``WENO5", and the corresponding linear schemes as ``Linear5 MPP" and ``Linear5", respectively. They will also be used for comparison in the following. The time step is taken as
\[
\Delta t \,=\, \text{CFL} /(\alpha_x/\Delta x + \alpha_y/\Delta y),
\]
where $\alpha_x = \|U_x\|_{L^\infty}$, $\alpha_y=\|U_y\|_{L^\infty}$ and we take the CFL number to be $\text{CFL} = 0.6$ in all the following tests.

We would emphasize that to apply the MPP limiter, the minimum and maximum values of the bounds are taken from the initial data, which can be explicitly determined for all the tests here.

\subsection{Linear transport equations}

\begin{exa}
	\label{ex1}
	We first take a 2D linear transport equation $\rho_t+\rho_x+\rho_y=0$ with initial condition
	\beq
	\rho(0,x,y)=\sin^4(x)+\sin^4(y),
	\eeq 
	on the domain $[0,\,2\pi]^2$ with periodic boundary conditions, to test the accuracy of the ``HLinear5 MPP" and ``HLinear5" schemes. The exact solution is 
	\beq
	\rho(t,x,y)=\sin^4(x-t)+\sin^4(y-t).
	\eeq
	The solution is within $[0,\,2]$. From Table \ref{tab1},
	we can clearly observe undershootings without the MPP limiter, while with limiter the lower bound is well preserved (as is for the upper bound) and 5th order accuracy is maintained. We would mention
	that for all other schemes we considered in this section, the accuracies are all tested and perform similarly. We omit them to save space.
	
	\begin{table}[h]
		\centering
		\caption{Accuracy test for the 2D linear transport equation in Example \ref{ex1}.  $\rho_{min}$ is the minimum numerical solution. ``WO'' stands for ``without limiter'', ``WL'' stands for ``with limiter''. $T=1$.}
		\vspace{0.2cm}
		\begin{tabular}{|c|c|c|c|c|c|c|}
			\hline
			& N  & $L^1$ error &    order   & $L^\infty$ error & order & $\rho_{min}$ \\\hline
			\multirow{5}{*}{WO}
			&  32 & 5.00e-04 &  --   & 1.29e-03  &  --   &  -0.001019   \\ \cline{2-7}
			&  64 & 1.90e-05 & 4.72 & 4.91e-05 & 4.72 &  -4.244e-05  \\ \cline{2-7}
			& 128 & 6.41e-07 & 4.89 & 1.68e-06 & 4.87 &  -1.242e-06  \\ \cline{2-7}
			& 256 & 2.05e-08 & 4.97  & 5.35e-08 & 4.97 &  -4.403e-08  \\ \hline
			\multirow{5}{*}{WL}
			&  32 & 5.16e-04 &  --   & 1.33e-03  &  --   &  0.0003958   \\ \cline{2-7}
			&  64 & 1.90e-05 & 4.77 & 5.54e-05 & 4.59 &  5.522e-06   \\ \cline{2-7}
			& 128 & 6.41e-07 & 4.89 & 2.07e-06 & 4.74 &  9.832e-13   \\ \cline{2-7}
			& 256 & 2.05e-08 & 4.97  & 7.78e-08  & 4.73 &  1.712e-09   \\ \hline
		\end{tabular}
		\label{tab1}
	\end{table}
	
	Then we take a 1D linear transport equation $\rho_t + \rho_x = 0$ with highly oscillatory solutions.
	Here the initial condition is chosen to be
	\beq
	\rho(0,x) = \sin(4x(x-2\pi)),
	\eeq
	on the domain $[0,\,2\pi]$ with periodic boundary condition, and the exact solution is
	\beq
	\rho(t,x) = \sin(4(x-t)(x-2\pi-t)).
	\label{exs12}
	\eeq
	
	In Fig. \ref{fig2}, we show the results for both linear and WENO type schemes, without and
	with MPP limiter respectively. From the figures, we can observe that the ``HLinear5" scheme has better resolution than the ``Linear5" scheme, due to a Hermite reconstruction for such a highly oscillatory solutions, while linear schemes are better than WENO type schemes due to less dissipation. We can also see that the resolution of these schemes do not significantly affect by the MPP limiter, which is used to control the solutions within the global physical range $[-1,\,1]$. So in the following, we will focus on the ``HLinear5" and ``HLinear5 MPP" schemes, and compare them
	to their corresponding WENO type approaches, which are ``HWENO5" and ``HWENO5 MPP" respectively. We mainly would like to show that a 5th order linear scheme with a Hermite reconstruction and an MPP limiter, that is ``HLinear5 MPP", is a very good scheme for highly oscillatory solution without discontinuities.
	
	%In Fig. \ref{fig1}, we present the solutions at $T=1.5$ with grid $N=160$ for the numerical solutions, while the exact solution is with grid $N=1800$.  First without limiter, we compare the two schemes
	%with linear weights (Top left, resulting in linear schemes respectively) to clearly show the different performance of them . We can find that our HWENO5 scheme can capture the peaks of the highly oscillating solution structures much better than the WENO5 scheme, especially on the interval of $[0, \, 2]$. 
	%Then we compare the two schemes both with nonlinear weights (Top right), we can see that
	%nonlinear weights would smear the numerical solutions on the highly oscillating part, however, we still can find that our HWENO5 scheme has better resolutions than the classic WENO5 scheme. On the bottom of Fig. \ref{fig1}, we show the results with limiter corresponding to the top figures respectively, the limiter almost does not affect the resolution of the original schemes.
	
	%\begin{center}
	%\begin{figure}
	%\begin{tabular}{cc}
	%\includegraphics[height=2.5in,width=3.0in]{./pic/TRS/u1.png} &
	%\includegraphics[height=2.5in,width=3.0in]{./pic/TRS/u2.png}\\
	%(a) & (b) \\
	%\includegraphics[height=2.5in,width=3.0in]{./pic/TRS/u1mpp.png} &
	%\includegraphics[height=2.5in,width=3.0in]{./pic/TRS/u2mpp.png} \\
	%(c) & (d)
	%\end{tabular}
	%\caption{1D linear equation with exact solution \eqref{exs12} at $T=1.5$. Left: linear weights; right: nonlinear weights. Top: without limiter; bottom: with limiter.}
	%\label{fig1}
	%\end{figure}
	%\end{center}
	
	\begin{center}
		\begin{figure}
			\begin{tabular}{cc}
				\includegraphics[height=2.5in,width=3.0in]{./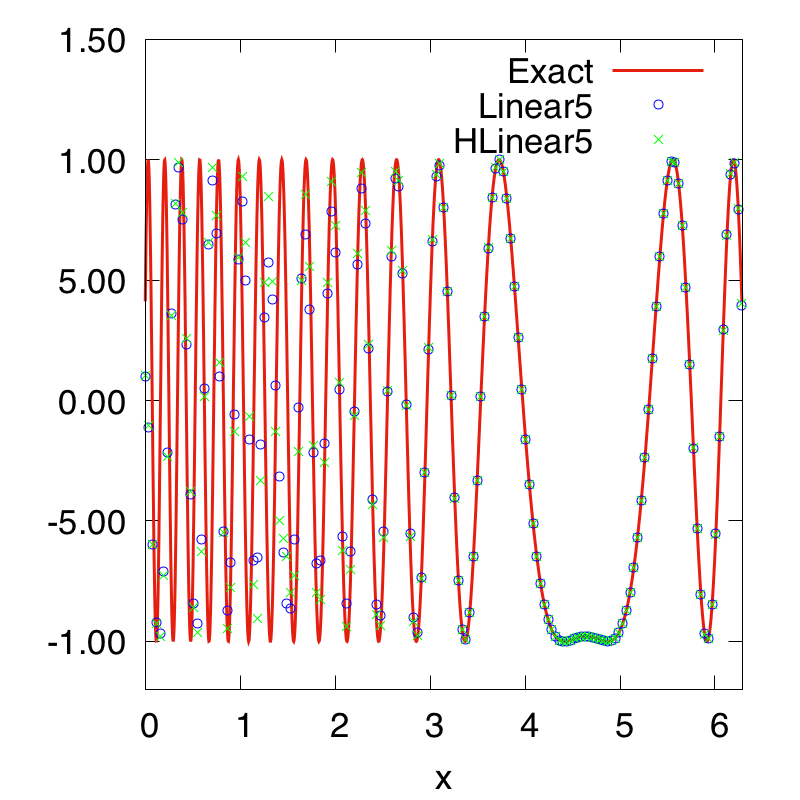} &
				\includegraphics[height=2.5in,width=3.0in]{./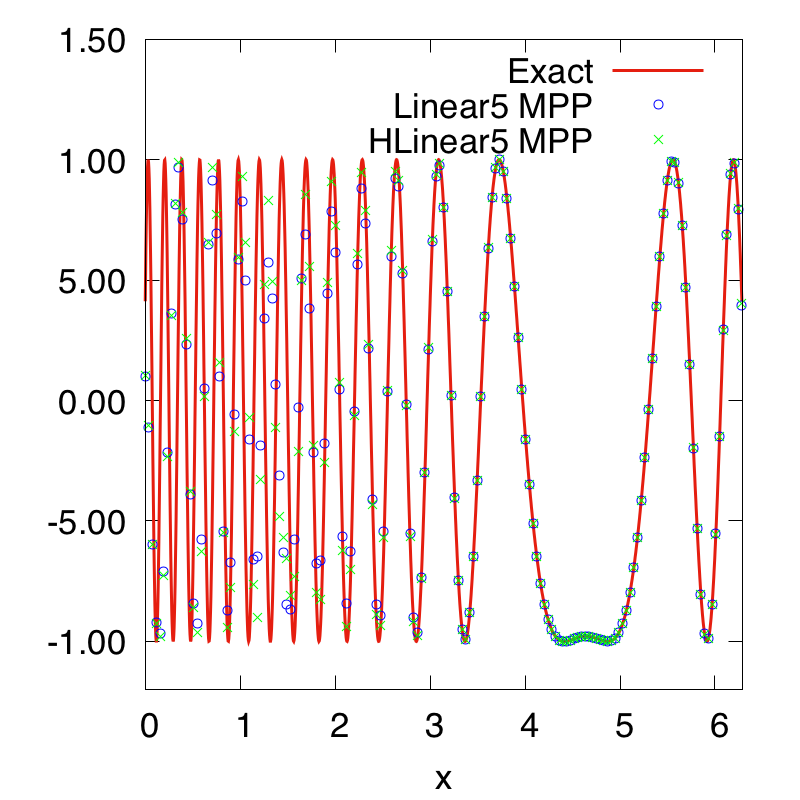}\\
				(a) & (b) \\
						\includegraphics[height=2.5in,width=3.0in]{./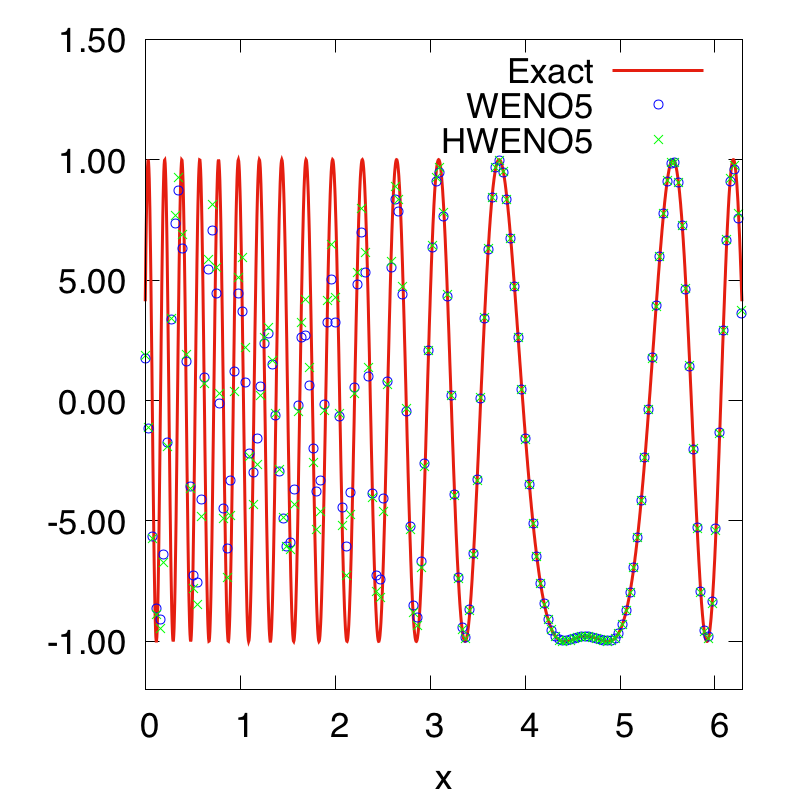} &
						\includegraphics[height=2.5in,width=3.0in]{./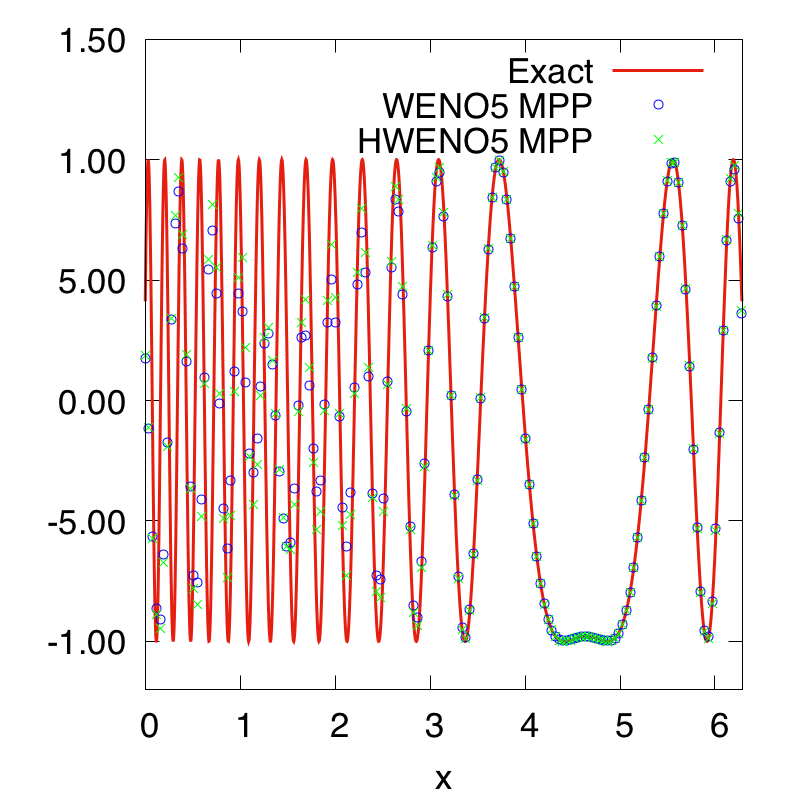}\\
							(c) & (d) \\
			\end{tabular}
			\caption{1D linear equation with exact solution \eqref{exs12} at $T=1.5$. Top: linear schemes; bottom: WENO schemes; left: without MPP limiter; right: with MPP limiter. $N=160$.}
			\label{fig2}
		\end{figure}
	\end{center}
	
\end{exa}

\subsection{Vlasov-Poisson system}
%The Vlasov-Poisson (VP) system is arising from the collisionless plasma physics,
%which reads
%\begin{equation}
%\frac{\partial f}{\partial t} + {\bf v} \cdot \nabla_{\bf x} f  +
%\mathbf{E}({\bf x},t) \cdot \nabla_{\bf v} f = 0, \label{eq: vlasov}
%\end{equation}
%and
%\begin{equation}
%\mathbf{E}(\mathbf{x},t)=-\nabla_{\bf x}\Phi(\mathbf{x},t),\quad
%-\Delta_{\bf
%x}\Phi(\mathbf{x},t)=\rho(\mathbf{x},t)-1.\label{eq: poisson3}
%\end{equation}
%It describes the evolution of the particle distribution function
%in a 6 dimensional phase space. The distribution function $f( {\bf x},{\bf v},t)$
%describes the probability of finding a particle with velocity $\bf{v}$ at position $\bf{x}$ at
%time $t$, $\bf{E}$ is the electric field, and $\Phi$ is the
%self-consistent electrostatic potential. The probability
%distribution function couples to the long range fields via the
%charge density, $\rho(t,x) = \int_{\mathbb{R}^3} f(x,v,t)dv$,
%where we take the limit of uniformly distributed infinitely massive
%ions in the background. 

In this subsection, we consider the VP system \eqref{eq:vlasov} and \eqref{eq:poisson}  with 1D in ${\bf x}$ and 1D in ${\bf v}$, and periodic boundary condition in both directions. We take $\bfx = (x,v)$ and use $N_v$ instead of $N_y$ for the special meaning of the argument $v$ here. The cut-off domain in the $v$ direction is taken to be $[-2\pi,2\pi]$ if without specifications. The VP system preserves several norms, which should remain constant in time:
\begin{enumerate}
%\item Mass:
%\[
%\text{Mass}=\int_v\int_xf(x,v,t)dxdv.
%\]
\item $L^p$ norm, $1\leq p<\infty$:
\begin{equation}
\|f\|_p=\left(\int_v\int_x|f(x,v,t)|^pdxdv\right)^\frac1p.
\end{equation}
\item Energy:
\begin{equation}
\text{Energy}=\int_v\int_xf(x,v,t)v^2dxdv + \int_xE^2(x,t)dx,
\end{equation}
where $E(x,t)$ is the electric field.
\item Entropy:
\begin{equation}
\text{Entropy}=\int_v\int_xf(x,v,t)\log(f(x,v,t))dxdv.
\end{equation}
\end{enumerate}
We will track the relative deviations of these quantities numerically to measure the quality of our numerical scheme.
The relative deviation is defined to be the deviation away from the corresponding initial value divided by the magnitude
of the initial value. The mass conservation over time $\int_v\int_x f(x,v,t)dxdv$ is obvious for  conservative schemes, which is the same as the $L^1$ norm when $f$ is positive, so only the $L^1$ norm is shown below. We would also note that for the VP system, although the minimum part is known as positivity preserving, however, due to a cut-off domain in the $v$ direction,
the minimum value might be close to but above $0$. We clearly indicate the minimum and maximum values from the initial data for the tests below.

%\begin{exa} (Weak Landau damping)
%For the VP system, we first consider the weak Landau  damping with the initial condition:
%\beq
%\label{landau}
%f(t=0,x,v)=\f{1}{\sqrt{2\pi}}(1+\alpha \cos(k x))\exp(-\f{v^2}{2}),
%\eeq
%where $\alpha=0.01$ and $k=0.5$. The length of the domain in the x-direction is
%$L=\f{2\pi}{k}$, which is similar in the following examples.
% In Fig. \ref{fig21}, we plot the time evolution of the electric field in $L^2$ norm and $L^\infty$ norm, the relative derivation of the discrete $L^1$ norm, $L^2$ norm, kinetic energy and entropy.
%
%\begin{figure}
%\centering
%\includegraphics[width=3in,clip]{./pic/vp/wl_E.png},
%\includegraphics[width=3in,clip]{./pic/vp/wl_en.png} \\
%\includegraphics[width=3in,clip]{./pic/vp/wl_l1.png},
%\includegraphics[width=3in,clip]{./pic/vp/wl_l2.png}
%\caption{Weak Landau damping. Time evolution of the electric field in $L^2$ norm and $L^\infty$ norm, kinetic energy and entropy, discrete $L^1$ norm and $L^2$ norm. Mesh: $128 \times 128$.}
%\label{fig21}
%\end{figure}
%\end{exa}

\begin{exa}\label{ex61}(Accuracy test)
	We first consider the VP system with the following initial condition
	\beq
	f(0,x,v)=\f{1}{\sqrt{2\pi}}\cos^4(k x)\exp(-\f{v^2}{2}),
	\label{vpsmooth}
	\eeq
	and periodic boundary conditions on the computational domain
	$[0, 4\pi]\times[-4\pi,4\pi]$, where $k=0.5$, to test the accuracy of the schemes
	for this system. From the initial data, the solution should be within the range $[0,\, \f{1}{\sqrt{2\pi}}]$.
	
	In Table \ref{tab3}, we show the $L^1$ and $L^{\infty}$ errors and orders for the  ``HLinear5" and ``HLinear5 MPP" scheme respectively. For this example, $5$th order accuracy can also be observed as mesh refinement. Without limiter, the solution of the distribution function does not preserve positivity, while with limiter, all values are above $0$.
	Here we measure the errors on two consecutive mesh sizes by double refinement, since the exact solution is not explicitly available.
	
	\begin{table}
		\centering
		\caption{$L^1$ and $L^{\infty}$ errors and orders for the VP system with initial condition (\ref{vpsmooth}) at $T=1$. ``WL" denotes the scheme with limiters, ``WO" denotes the scheme without limiters. ``$f_{min}$" is the minimum of the numerical solution. Mesh size $N_v=2N_x$.}
		\vspace{0.2cm}
		\begin{tabular}{|c|c|c|c|c|c|c|}
			\hline
			& $N_x$  & $L^1$ error &    order   & $L^\infty$ error & order & $f_{min}$ \\ \hline
			\multirow{5}{*}{WO}
           &64 & 1.66e-05 &  --  & 0.0004144 &  --  &  -1.317e-06 \\ \cline{2-7}
           &128 & 6.83e-07 & 4.60 & 1.913e-05 & 4.44 & -2.197e-08 \\ \cline{2-7}
           &256 & 2.54e-08 & 4.75 & 7.137e-07 & 4.75 &  -1.777e-09 \\ \cline{2-7}
           &512 & 8.29e-10 & 4.94 & 2.342e-08 & 4.93 &  -5.535e-11 \\ \hline
			\multirow{5}{*}{WL}
           &64 & 1.68e-05 &  --  & 0.0004177 &  --  &  1.59e-32 \\ \cline{2-7}
           &128 & 6.85e-07 & 4.62 & 1.915e-05 & 4.45 & 1.768e-34 \\ \cline{2-7}
          &256 & 2.54e-08 & 4.75 & 7.136e-07 & 4.75 & 2.339e-35 \\ \cline{2-7}
         &512 & 8.30e-10 & 4.94 & 2.342e-08 & 4.93 & 4.797e-36 \\ \hline
		\end{tabular}
		\label{tab3}
	\end{table}
	
\end{exa}

\begin{exa}\label{ex62}(Strong Landau damping)
We then consider the strong Landau damping with the initial condition:
\beq
\label{landau}
f(0,x,v)=\f{1}{\sqrt{2\pi}}(1+\alpha \cos(k x))\exp(-\f{v^2}{2}),
\eeq
where $\alpha=0.5$ and $k=0.5$. The length of the domain in the x-direction is
$L=\f{2\pi}{k}$, which is similar in the following two examples. For this problem,
from the initial data, the solution should be within the range $[\f{1}{\sqrt{2\pi}}(1-\alpha)\exp(-\f{(2\pi)^2}{2}), \, \f{1}{\sqrt{2\pi}}(1+\alpha)]$. 
In Fig. \ref{fig20}, we plot
the surface of the distribution function $f$ at $T=50$ in the range of $[0, 0.6]$. The mesh grid is $256\times256$. We can observe that without limiter, the solution can be negative (white spots), while negative values are eliminated by the scheme with limiter, similarly for the following examples. Then in Fig. \ref{fig21}, we show the time evolution of the electric field in $L^2$ norm and $L^\infty$ norm, the relative derivation of the discrete $L^1$ norm, $L^2$ norm, kinetic energy and entropy. From this figure,
we can see that the electric field for all schemes are almost the same. The linear type scheme can preserve the $L^2$ norm and entropy better than the WENO type scheme, where the MPP limiter does 
not significantly affect these two quantities. However, for the $L^1$ norm and energy, the linear scheme
without limiter performs much worse than the WENO type scheme, but after with limiter, it is the best.
Especially for the energy, ``HLinear5 MPP" is much better than all other schemes. This example has clearly show the good performance of our approach.

\begin{figure}
\centering
\includegraphics[width=3.2in]{./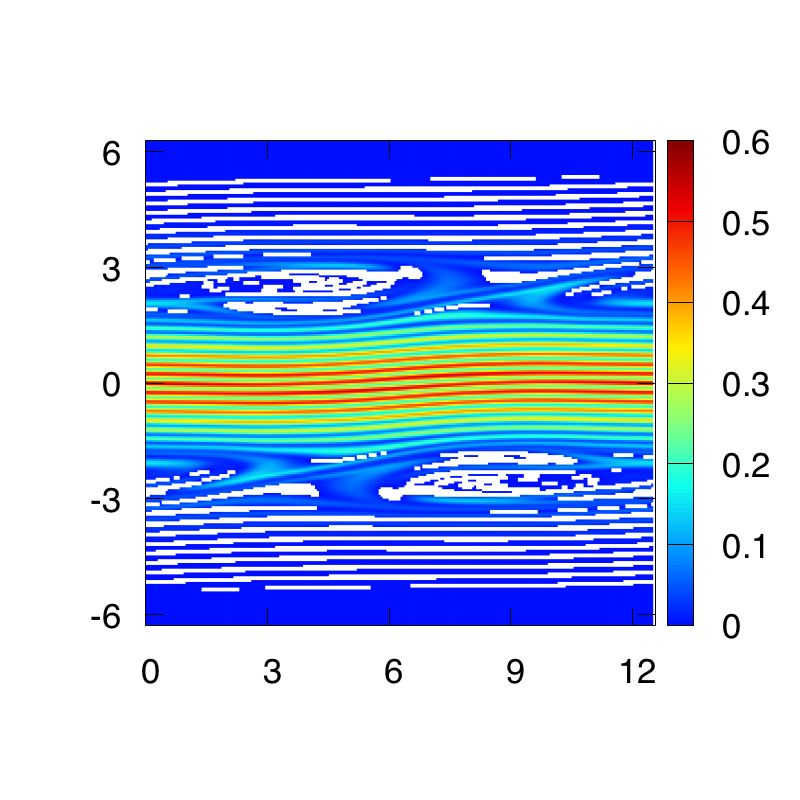}
\includegraphics[width=3.2in]{./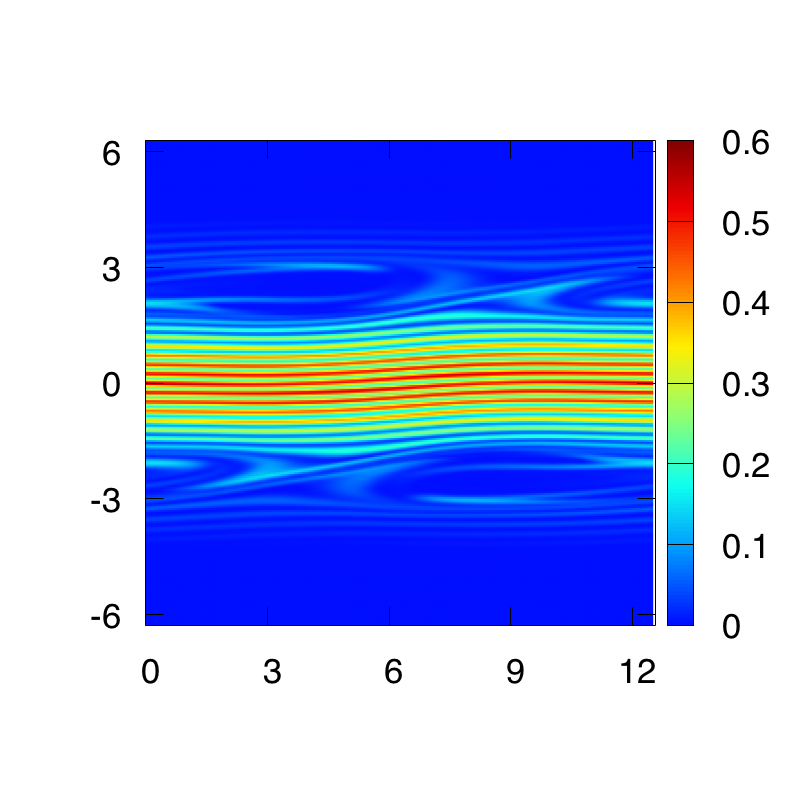}
\caption{Strong Landau damping at $T=50$. Left: without limiter; Right: with limiter. Mesh grid: $256 \times 256$. }
\label{fig20}
\end{figure}

\begin{figure}
\centering
\includegraphics[width=3in,clip]{./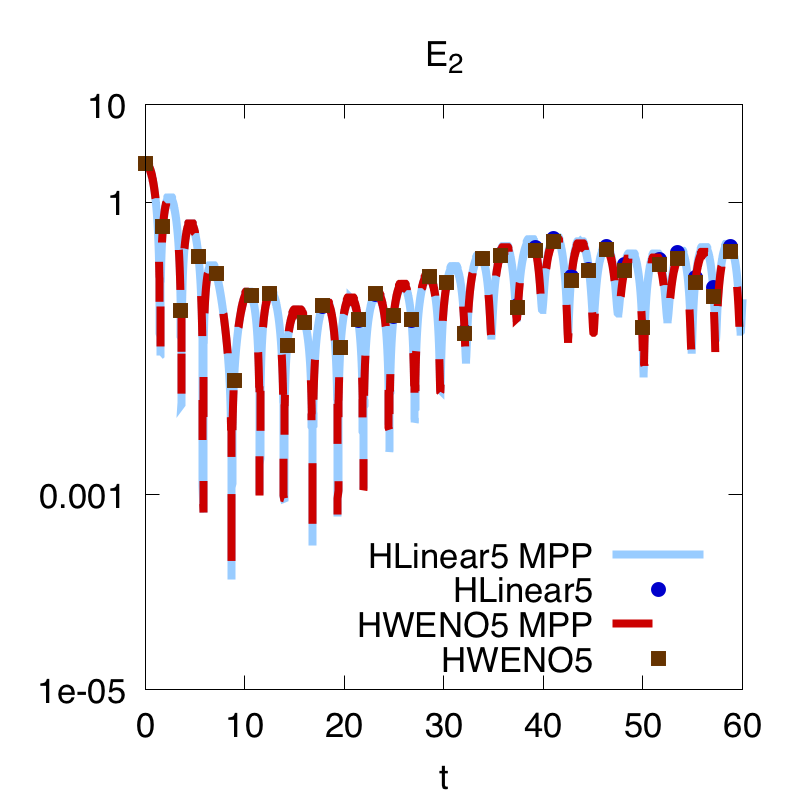}
\includegraphics[width=3in,clip]{./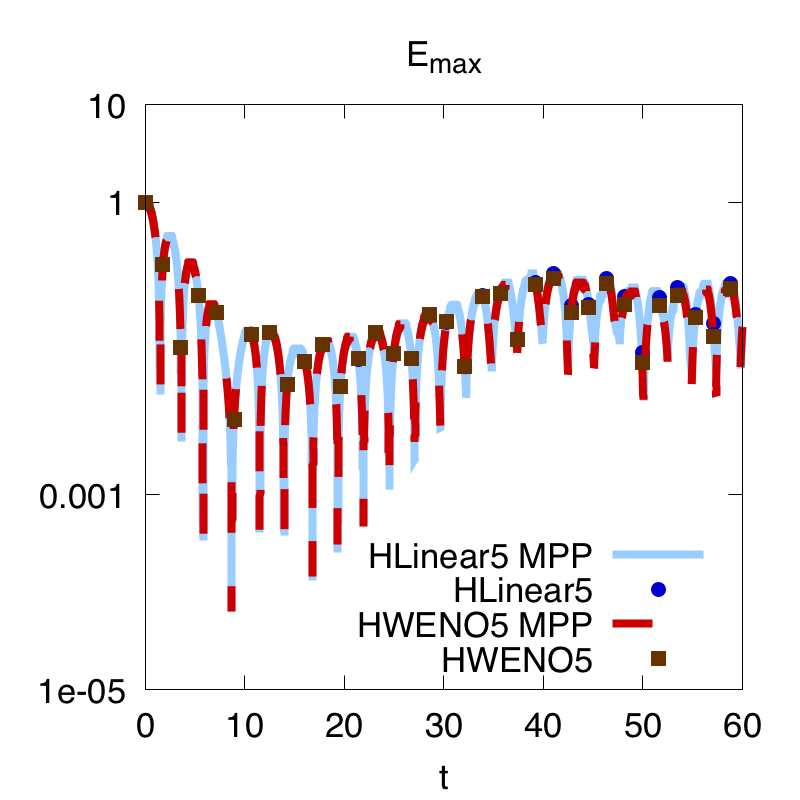}\\
\includegraphics[width=3in,clip]{./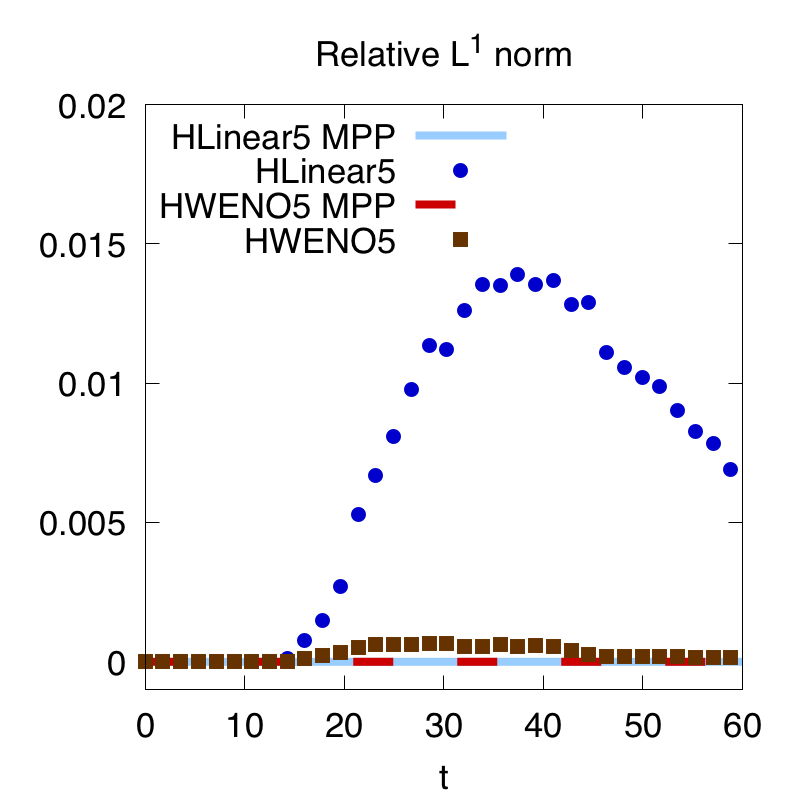}
\includegraphics[width=3in,clip]{./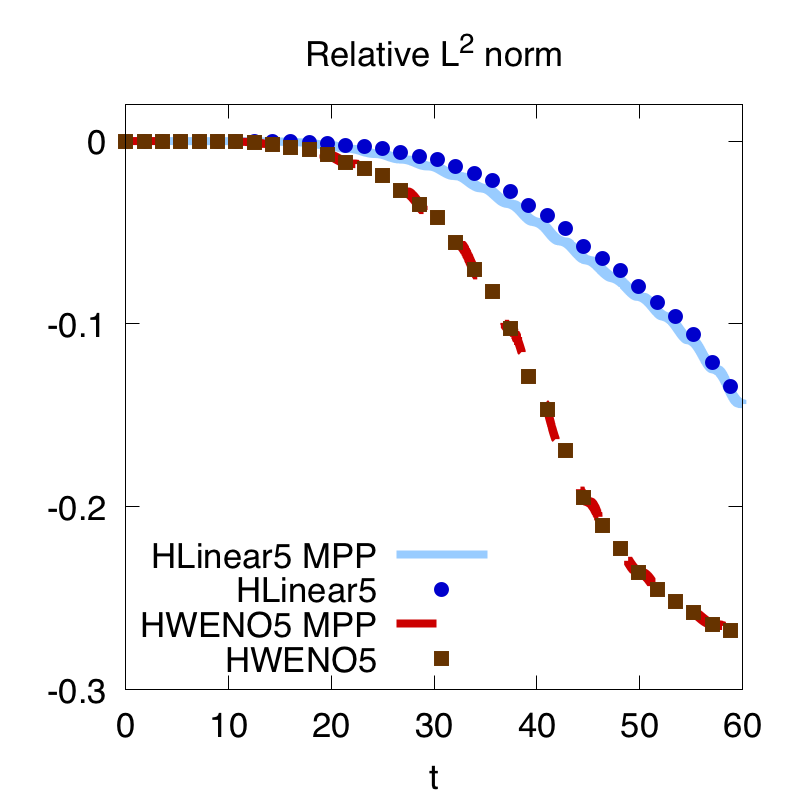}\\
\includegraphics[width=3in,clip]{./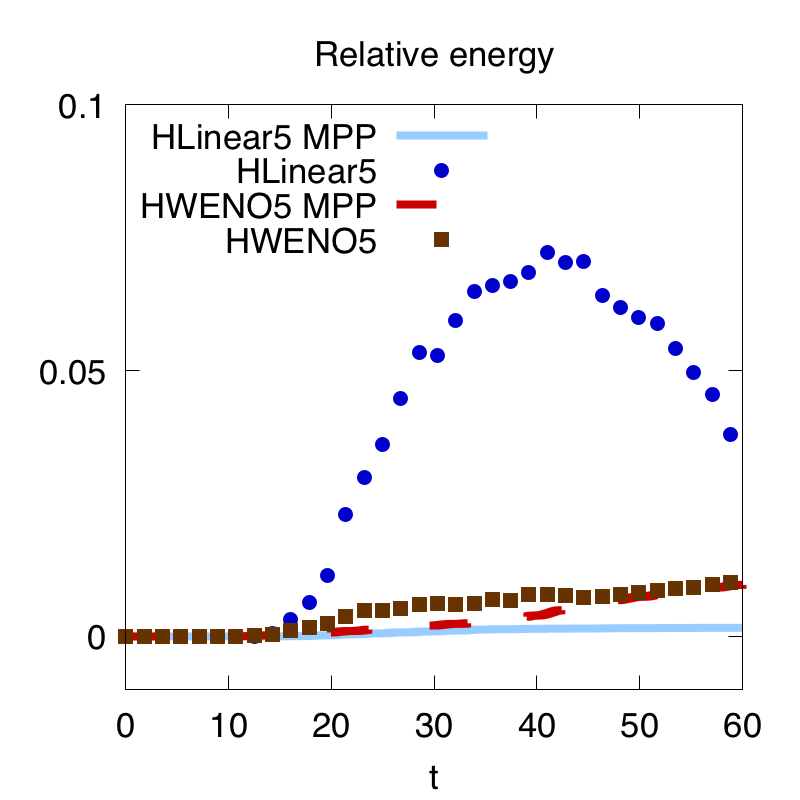}
\includegraphics[width=3in,clip]{./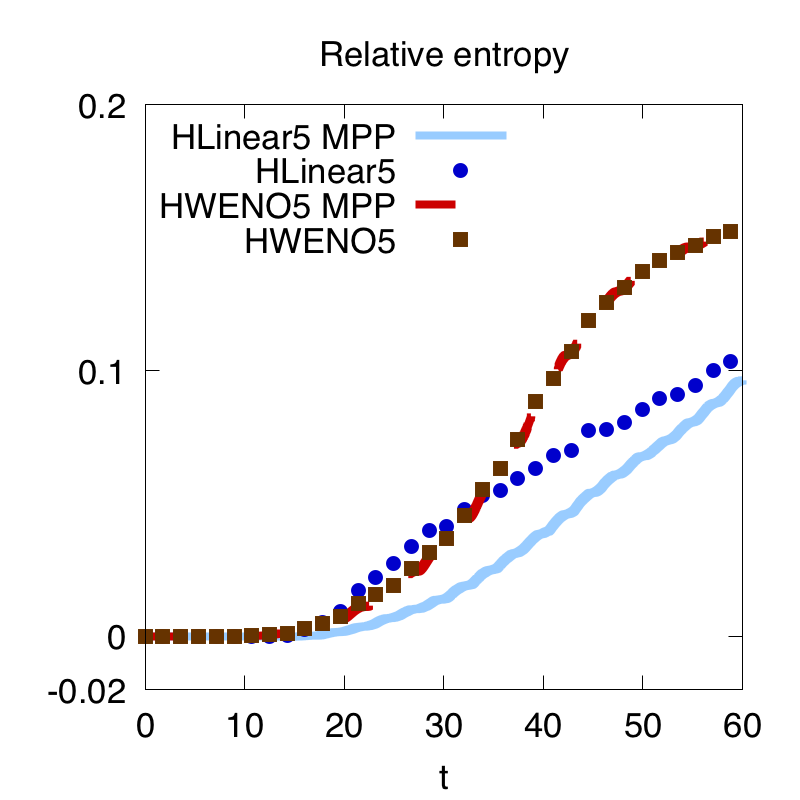} 
\caption{Strong Landau damping. Time evolution of the electric field in $L^2$ norm and $L^\infty$ norm (top), relative differences of discrete $L^1$ norm and $L^2$ norm (middle), relative differences of kinetic energy and entropy (bottom). Mesh grid: $256 \times 256$.}
\label{fig21}
\end{figure}

\end{exa}

\begin{exa} \label{ex63} (Symmetric two stream instability)
We now consider the symmetric two stream instability with the initial condition:
\beq
\label{s2s}
f(0,x,v)=\f{1}{2v_{th}\sqrt{2\pi}}\left[\exp\left(-\f{(v-u)^2}{2v_{th}^2}\right)
+\exp\left(-\f{(v+u)^2}{2v_{th}^2}\right)\right](1+\alpha\cos(kx))
\eeq
where $\alpha=0.05$, $u=0.99$, $v_{th}=0.3$ and $k=\f{2}{13}$. Similarly, from the initial data,
the minimum value of the solution is taking $v=2\pi$ and $\cos(kx)=-1$ in \eqref{s2s}, while the maximum value is taking $v=u$ or $v=-u$, and $\cos(kx)=1$.
We plot the numerical solution at $T=70$ in Fig. \ref{fig24} for the ``HLinear5" and ``HLinear5 MPP" schemes respectively. The mesh grid is $256\times256$. We can also clearly observe that without limiter, the solution becomes negative, which, however, can be eliminated by the scheme with limiter.  We show the time evolution of the electric field in $L^2$ norm and $L^\infty$ norm, the relative derivation of the discrete $L^1$ norm, $L^2$ norm, kinetic energy and entropy in Fig. \ref{fig25}. Similar
results as the last example are obtained.

\begin{figure}
\centering
\includegraphics[width=3.2in]{./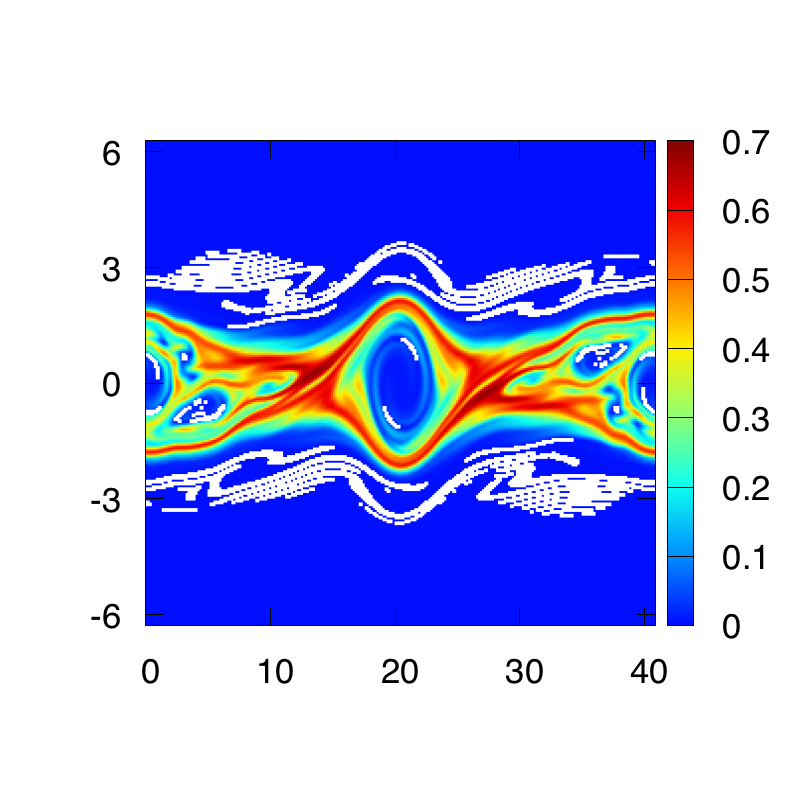}
\includegraphics[width=3.2in]{./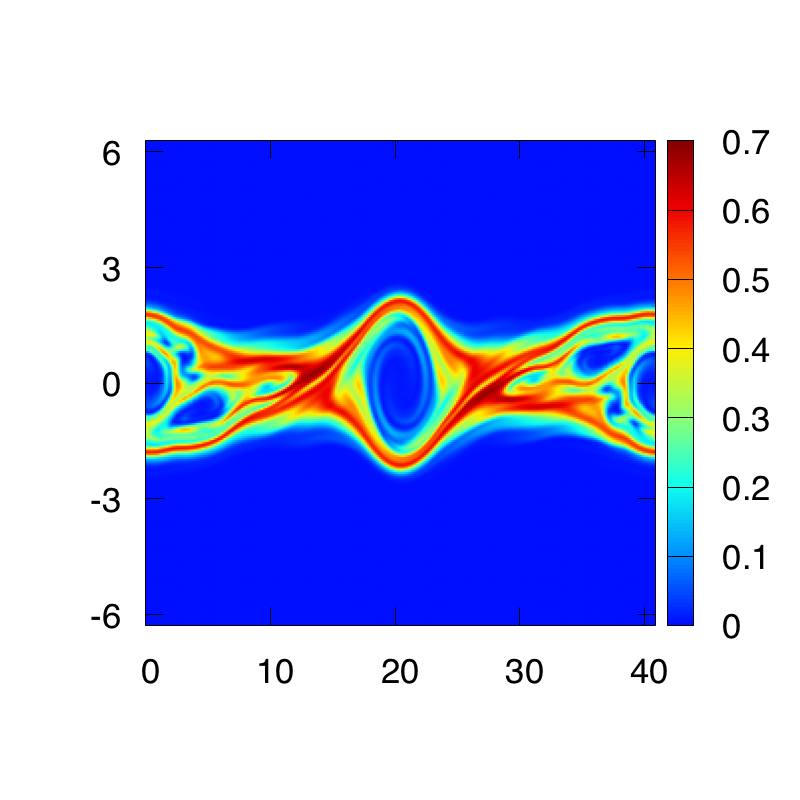}
\caption{Symmetric two stream instability at $T=70$.
Left: without limiter; Right: with limiter. Mesh grid: $256 \times 256$. }
\label{fig24}
\end{figure}

\begin{figure}
\centering
\includegraphics[width=3in,clip]{./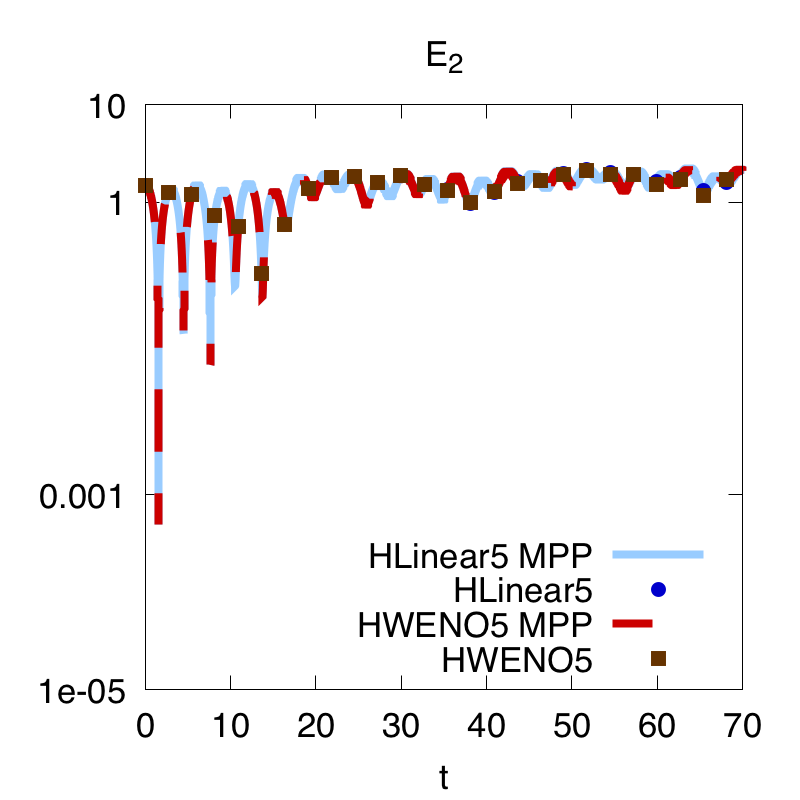}
\includegraphics[width=3in,clip]{./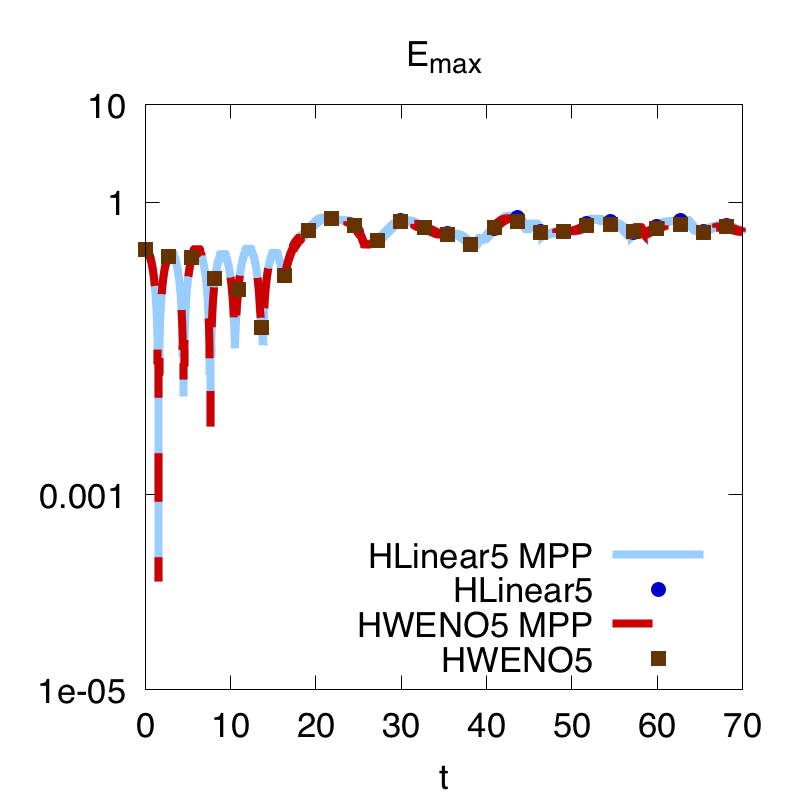}\\
\includegraphics[width=3in,clip]{./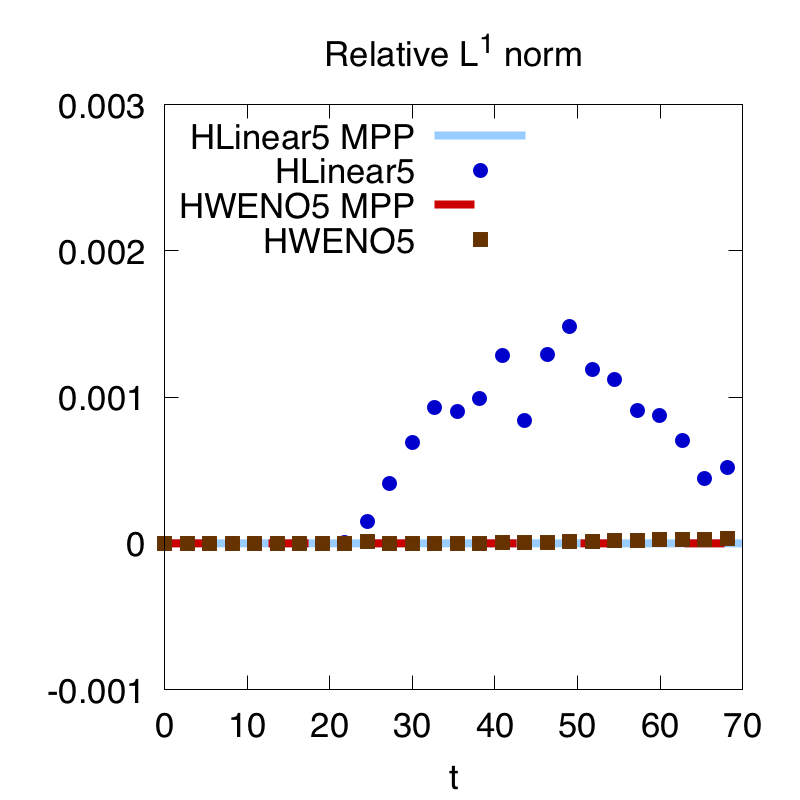}
\includegraphics[width=3in,clip]{./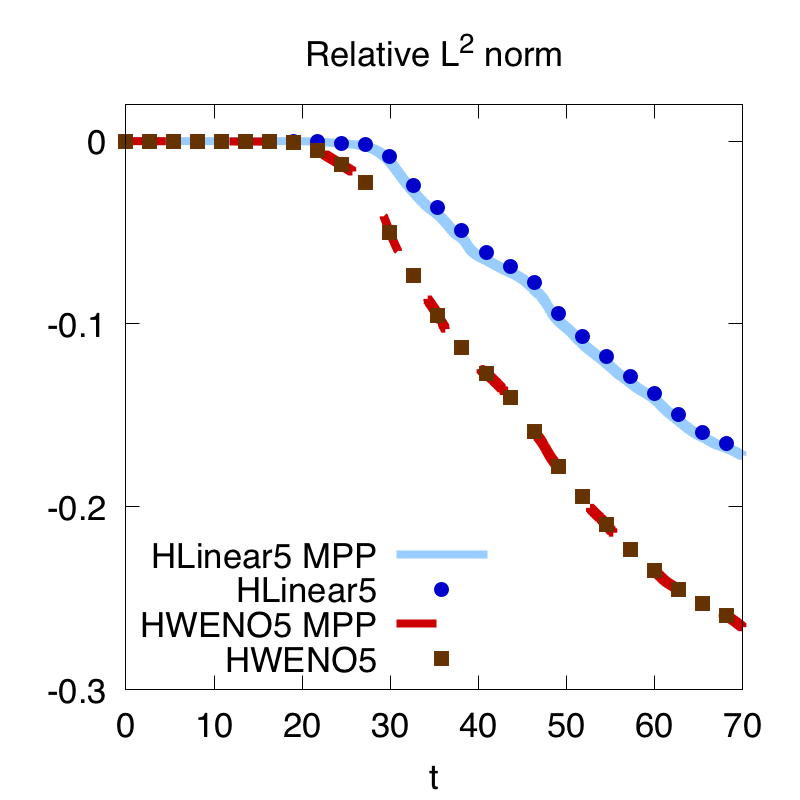}\\
\includegraphics[width=3in,clip]{./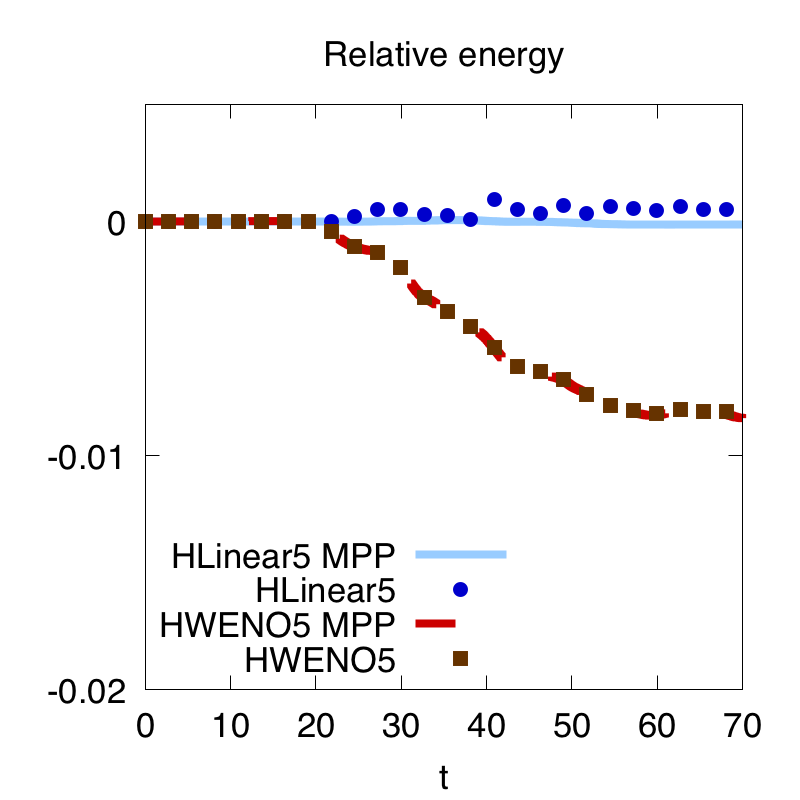}
\includegraphics[width=3in,clip]{./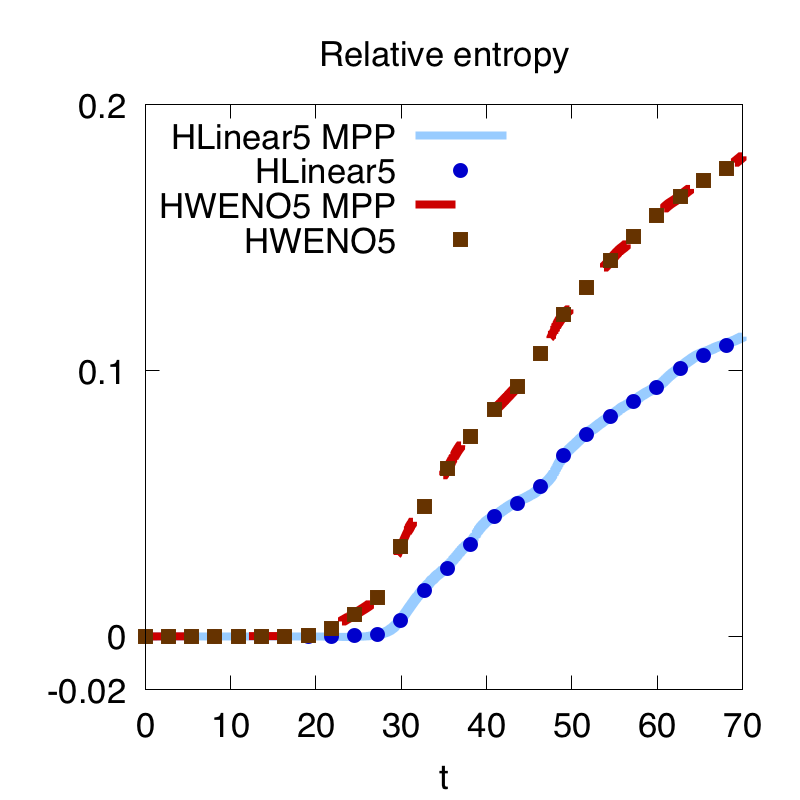}
\caption{Symmetric two stream instability. Time evolution of the electric field in $L^2$ norm and $L^\infty$ norm (top), relative differences of discrete $L^1$ norm and $L^2$ norm (middle), relative differences of kinetic energy and entropy (bottom). Mesh grid: $256 \times 256$.}
\label{fig25}
\end{figure}

\end{exa}

\begin{exa}\label{ex64}(Bump-on-tail instability)
We consider the bump-on-tail instability problem \cite{arber2002critical,xiong2014high} with the initial condition as
	\begin{align}
	\label{bump}
	f(0,x,v)=f_{b.o.t}(v)(1+\alpha\cos(kx)),
	\end{align} 
	where the bump-on-tail distribution is
	\begin{align}
	\label{bump2}
	f_{b.o.t}(v)=\frac{n_p}{\sqrt{2\pi}}\exp(-\frac{v^2}{2})+\frac{n_b}{\sqrt{2\pi}}\exp(-\frac{(v-v_b)^2}{2v_t^2}),
	\end{align}
	and $n_p=0.9$, $n_b=0.2$, $v_b=4.5$, $v_t=0.5$, $\alpha=0.04$, $k=0.3$. The computational domain is $[0,\frac{2\pi}{k}]\times[-3\pi, 3\pi]$. We take
	the cut-off domain in $v$ a little larger for the consideration of the shifting $v_b$ in the velocity. For this problem, the minimum value of the initial data is taking $v=-3\pi$ and $\cos(kx)=-1$ in \eqref{bump} and \eqref{bump2}, while the maximum value is taking $v=0$ and $\cos(kx)=1$.
	We run a long time simulation up to $T=1000$, which is a good test to show the effectiveness of the linear scheme in preserving some important quantities and the saving of computational cost as compared to the WENO type scheme. We first show the surface of the distribution function at $T=500$ in Fig. \ref{fig26}, we can clearly observe without limiter, the solution has undershootings, while the scheme with limiter produces
	very good result. We show the time evolution of the electric field in $L^2$ norm and $L^\infty$ norm in Fig. \ref{fig27}, and compare the linear scheme with the WENO type scheme. The $L^\infty$ norm of the electric filed $E_{max}$ matches those in \cite{xiong2014high,arber2002critical}. However, we can see that the linear scheme preserves the electric field better than the WENO type scheme after some large time, e.g., $T=400$,  while the results are almost the same between with and without limiter.  We then show the relative derivation of the discrete $L^1$ norm, $L^2$ norm, kinetic energy and entropy in Fig. \ref{fig28}. The MPP limiter almost does not affect the $L^2$ norm and entropy, however, again we can see it improves the $L^1$ norm and the energy a lot from  eliminating negative numerical values, which indicates the great performance of the MPP limiter on the linear scheme. Besides, the linear scheme preserves the $L^2$ norm, entropy and energy much better than the WENO type scheme, especially the energy. Moreover, for the computational cost, the linear scheme is at least about 3-4 times faster than the WENO type scheme. This example with long time simulation clearly demonstrates that the linear scheme is better than the WENO type scheme, which is much less dissipative and computationally much more efficient.
		
	\begin{figure}
		\centering
		\includegraphics[width=3.2in]{./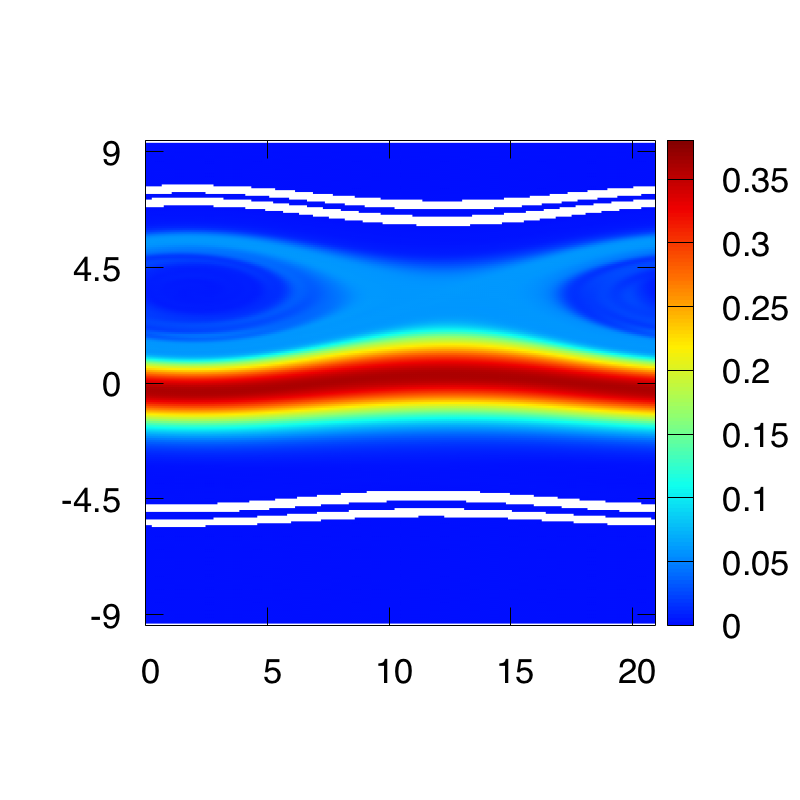}
		\includegraphics[width=3.2in]{./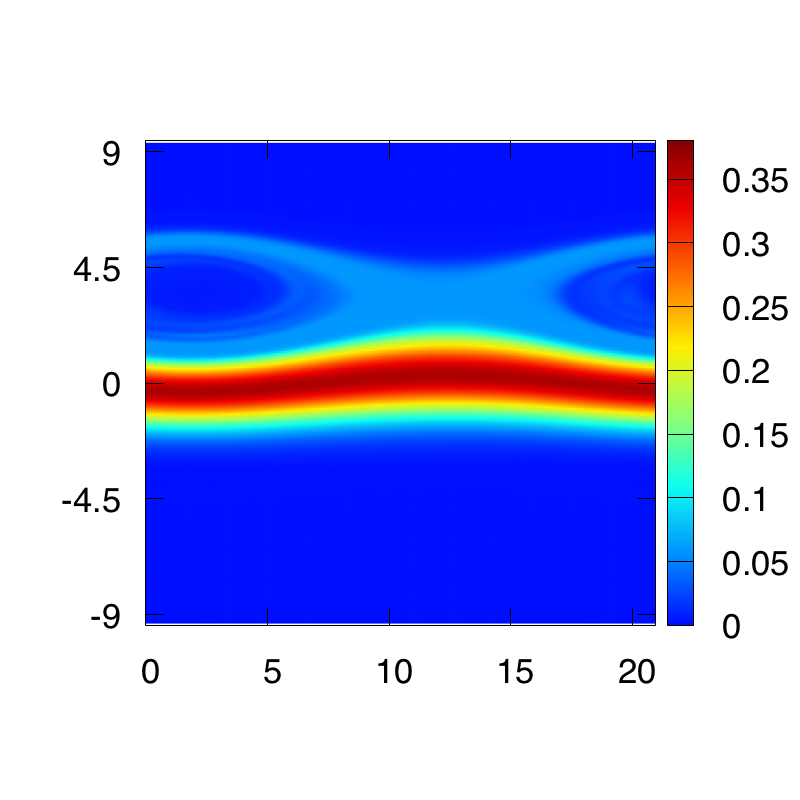}
		\caption{Bump-on-tail instability at $T=500$.
			Left: without limiter; Right: with limiter. Mesh grid: $256 \times 256$. }
		\label{fig26}
	\end{figure}

\begin{figure}
	\centering
	\includegraphics[width=5in,clip]{./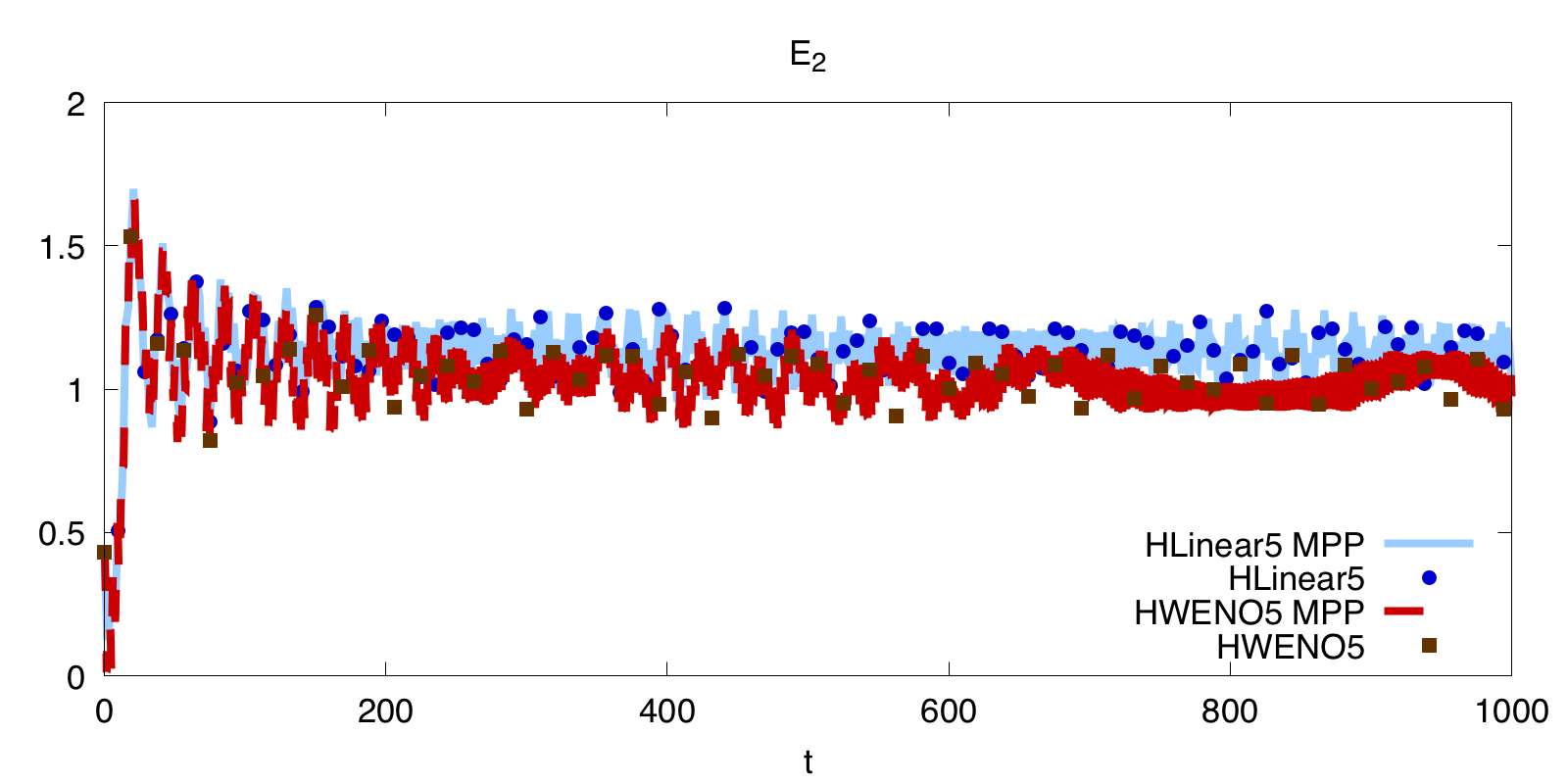}\\
	\includegraphics[width=5in,clip]{./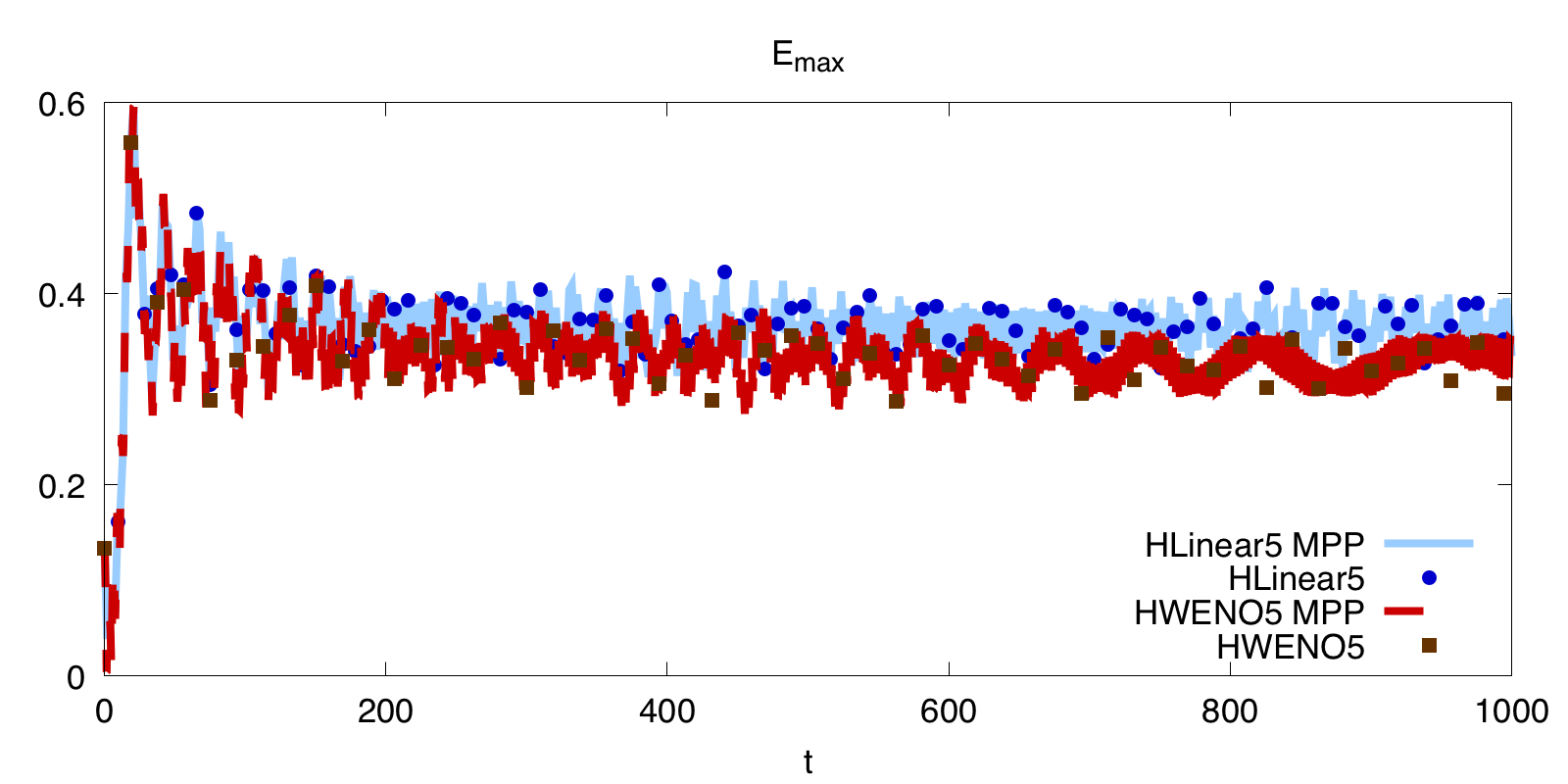}
	\caption{Bump-on-tail instability. Time evolution of the electric field in $L^2$ norm (top) and $L^\infty$ norm (bottom). Mesh grid: $256 \times 256$.}
	\label{fig27}
\end{figure}
	
\begin{figure}
	\centering
	%\includegraphics[width=3in,clip]{./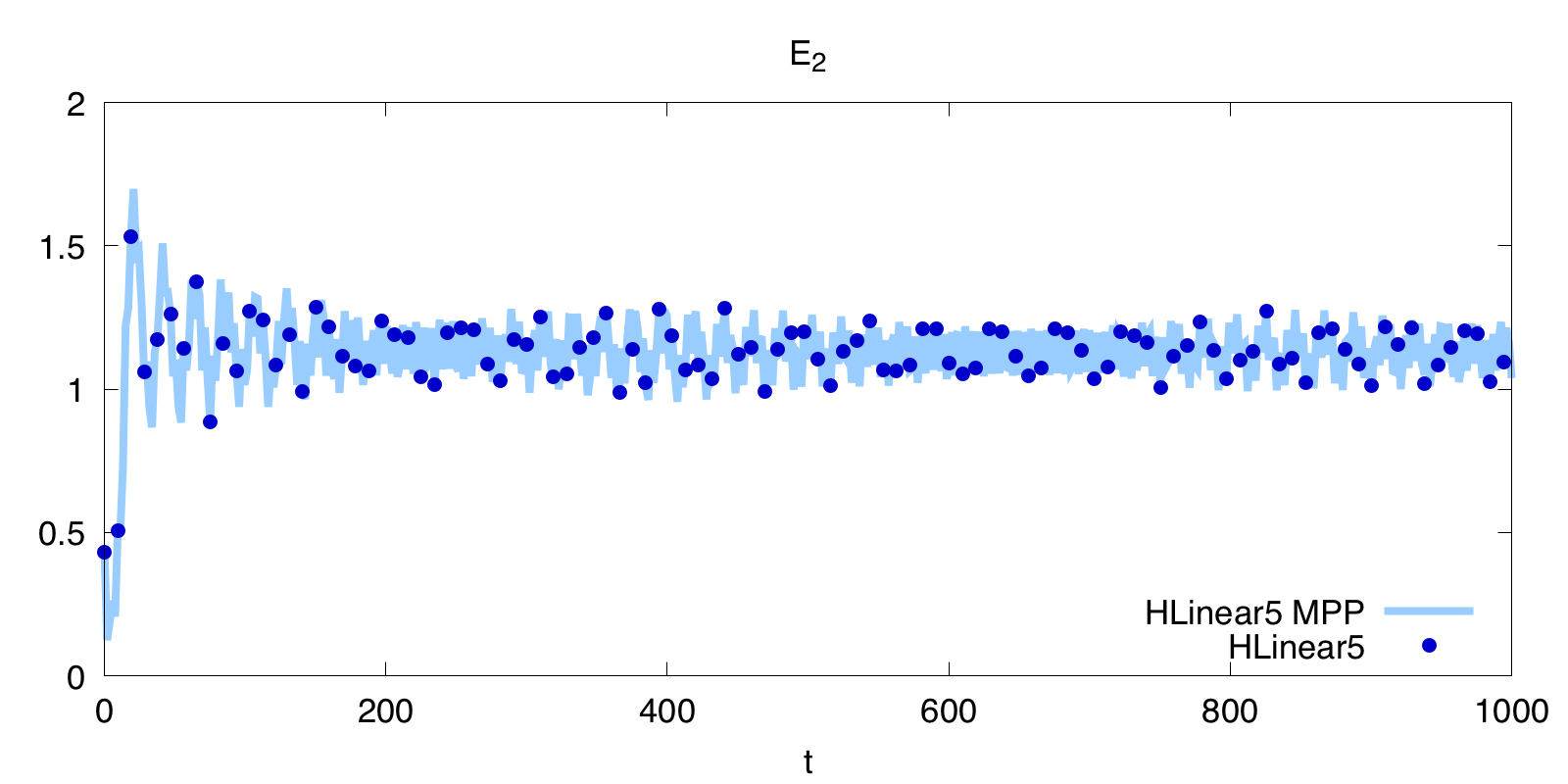},\\
	%includegraphics[width=3in,clip]{./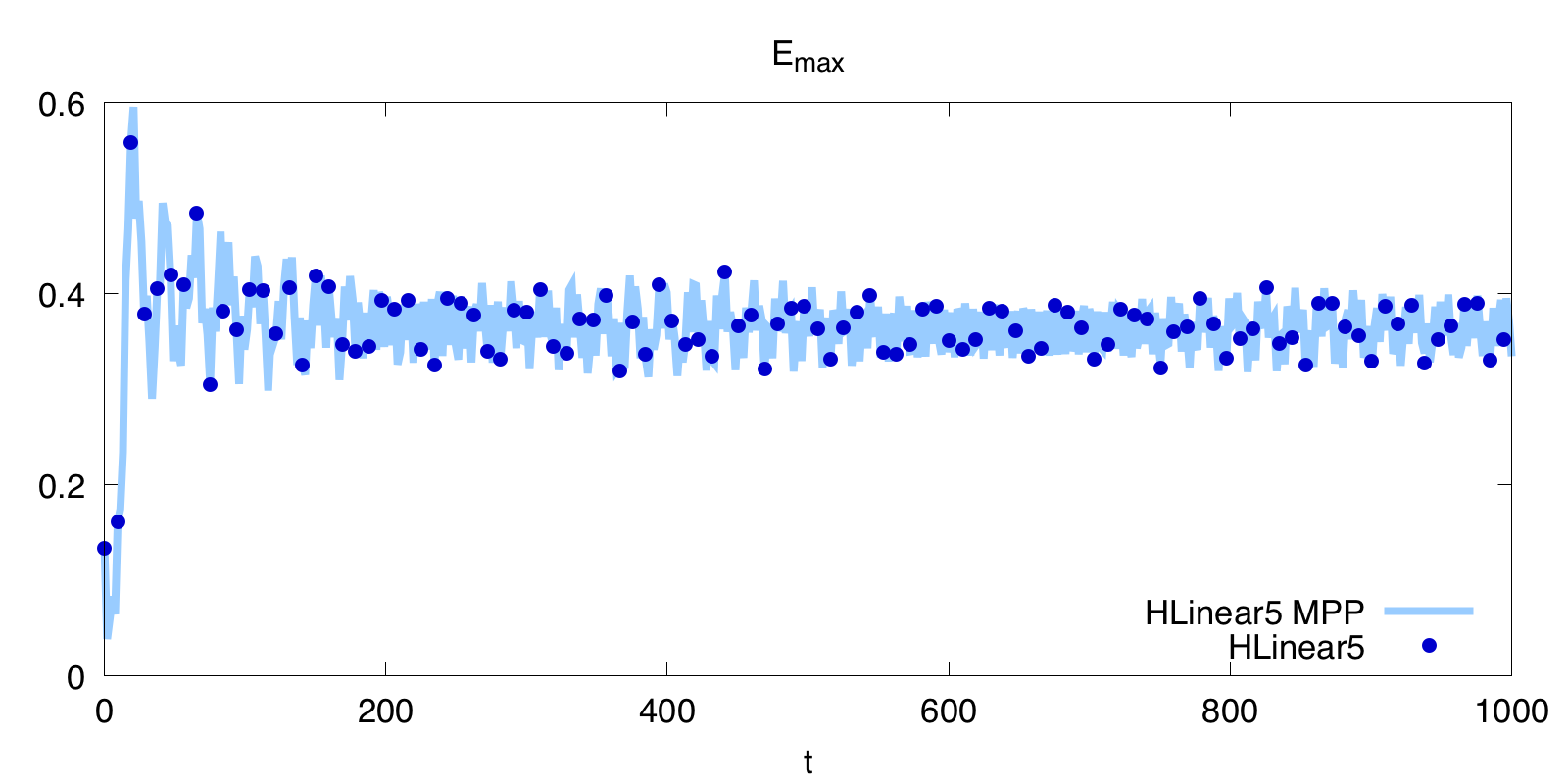}, \\
	\includegraphics[width=3in,clip]{./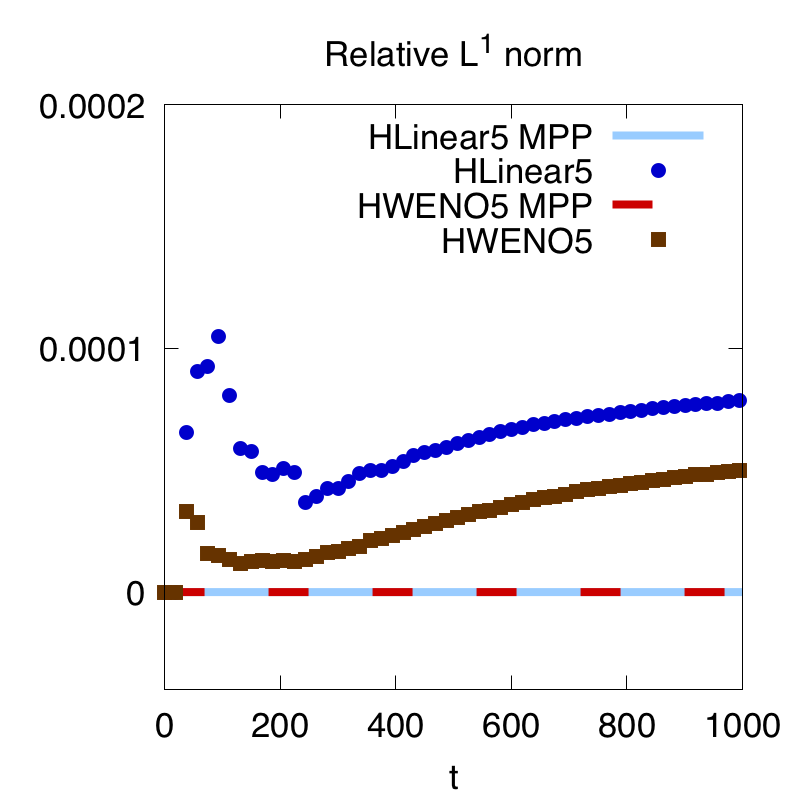}
	\includegraphics[width=3in,clip]{./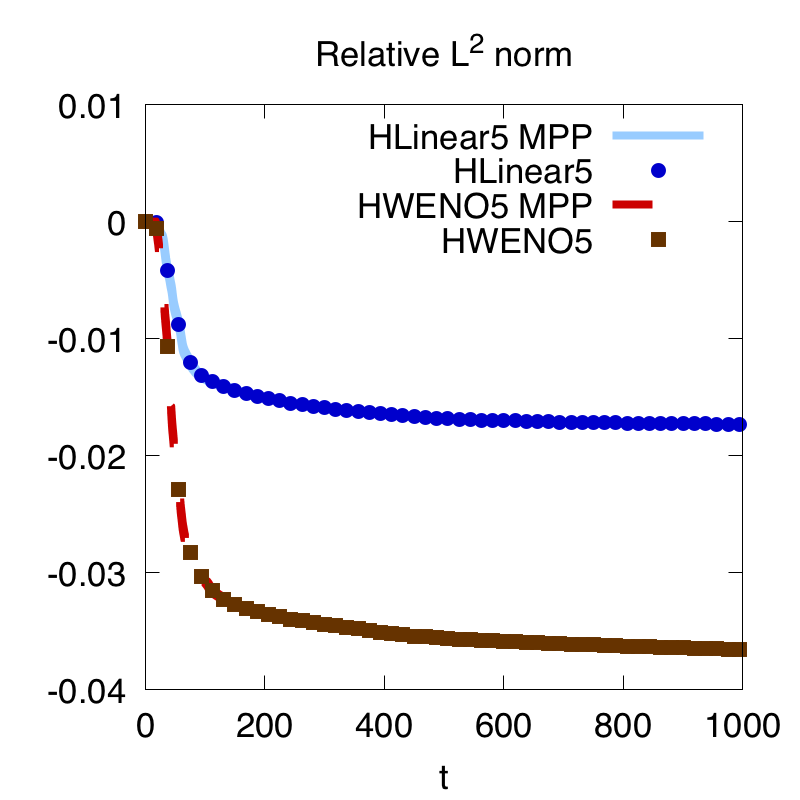} \\
	\includegraphics[width=3in,clip]{./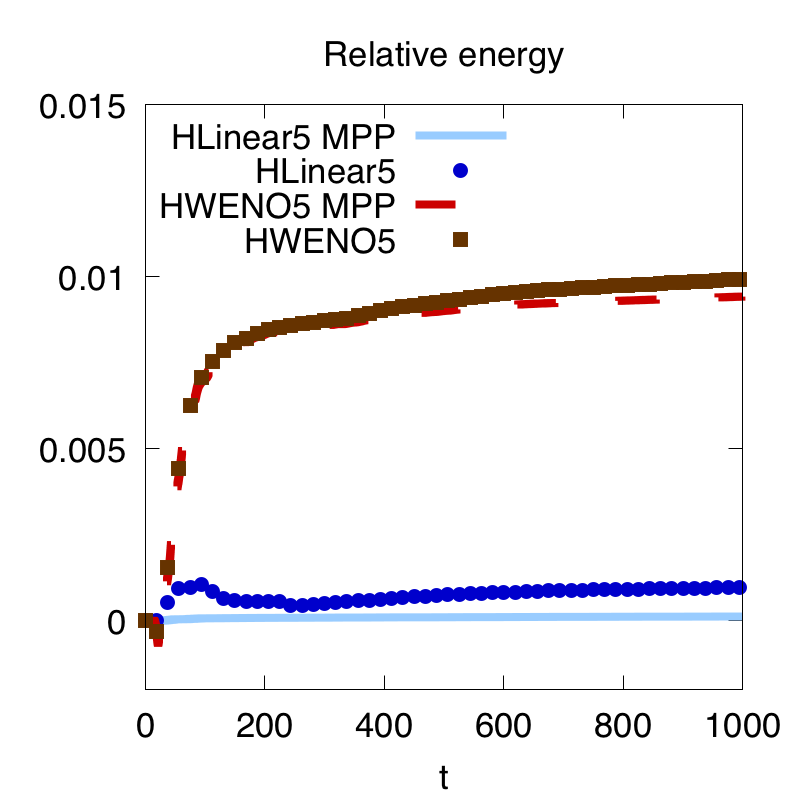}
	\includegraphics[width=3in,clip]{./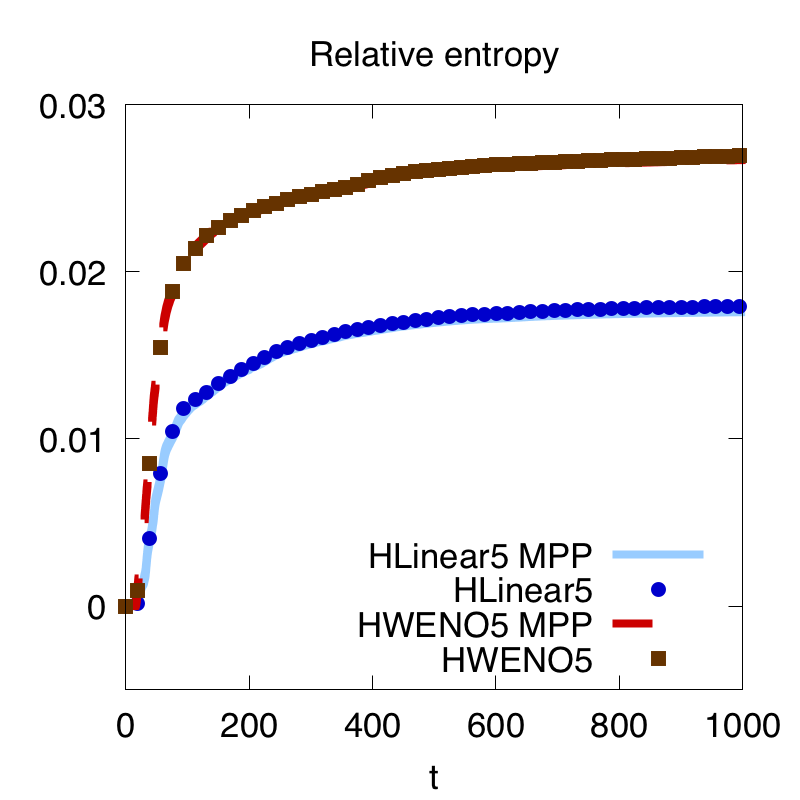}
	\caption{Bump-on-tail instability. Time evolution of the relative differences of discrete $L^1$ norm and $L^2$ norm (top), relative differences of kinetic energy and entropy (bottom). Mesh grid: $256 \times 256$.}
	\label{fig28}
\end{figure}
	
	\end{exa}

\subsection{Kelvin-Helmholtz instability}
%The 2D guiding-center model describes the time evolution of the charge density in a highly magnetized plasma in the transverse plane of a tokamak, which is given by
%\begin{equation}
%\frac{\partial \rho}{\partial t} + {\bf U} \cdot \nabla \rho = 0, \label{eq: guidingcenter}
%\end{equation}
%and
%\begin{equation}
%-\Delta \Phi =\rho,  \label{eq: poisson}
%\end{equation}
%where ${\bf U} = (-\partial_y \Phi, \partial_x \Phi)$ is the divergence free velocity.

\begin{exa} The Kelvin-Helmholtz instability comes from the 2D guiding-center model \eqref{eq:2dgc}  \cite{frenod2015long} with the initial data
	\begin{equation}
	\rho_0(x,y)=\sin(y)+0.015\cos(kx)
	\end{equation}
	and periodic boundary conditions on the domain $[0,4\pi]\times[0,2\pi]$. We let $k=0.5$, which
	will create a Kelvin-Helmholtz instability. The exact solution should be within the range $[-1.015, 1.015]$.
	
	For this example, we show the solution at $T=40$ with mesh grid $256\times 256$ for ``HLinear5" and ``HLinear5 MPP" in Fig. \ref{fig31} respectively. For the figures drawing in the physical range $[-1.015, 1.015]$, we can observe white spots for ``HLinear5" which is without MPP limiter, while ``HLinear5 MPP" with MPP limiter preserves the bounds very well.  In Fig. \ref{fig32}, we compare the time evolution of the relative $L^2$ norm and the numerical minimum values for both linear and WENO type schemes without and with limiters respectively. First, we can find that the linear scheme is less dissipative than the WENO one, as the $L^2$ norm preserves better for the linear scheme.  The schemes with and without MPP limiter preserves the $L^2$ norm almost the same, so that there is no significant affection from the MPP limiter. For the minimum numerical values, the linear scheme without MPP limiter has shown large undershootings, the WENO type scheme without MPP limiter performs even worse for this example (which is also similar for the classic WENO scheme ``WENO5", we omit the figure here), but both schemes with MPP limiter preserve the lower bound very well.  Similarly for the upper bound.  
	
	From this example, we can see that for problems with highly oscillatory but
	without discontinuous solutions, a high order linear scheme with the MPP limiter can result in a very good scheme, which is less dissipative and without significant spurious oscillations. The original idea to apply WENO reconstruction to suppress numerical oscillations seems to be even fail for this example. We can take this example as a benchmark test to support our main idea in this paper.
	
	\begin{figure}
		\begin{center}
			\includegraphics[width=3.2in]{./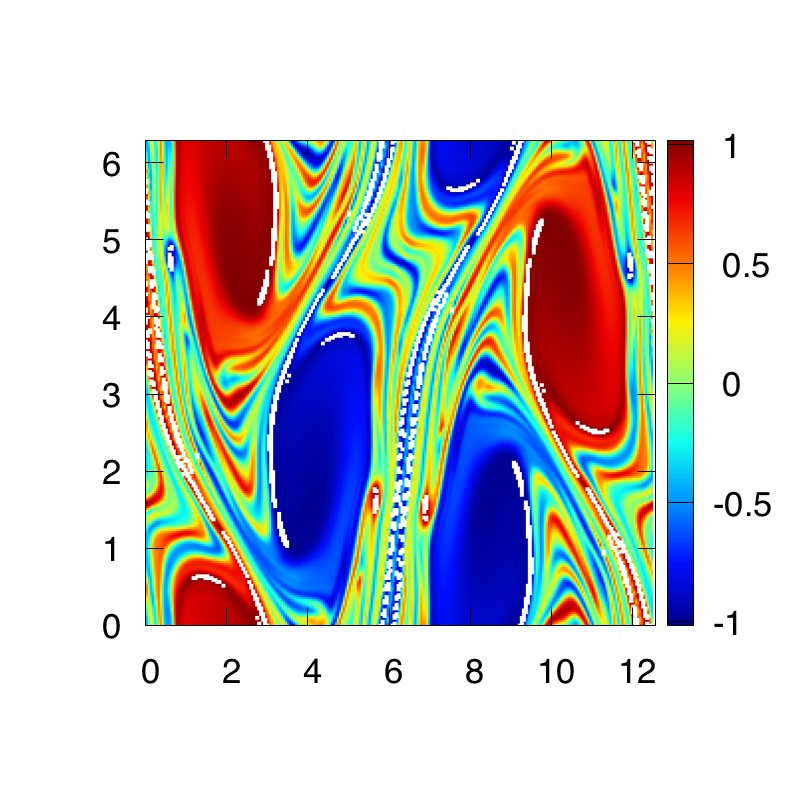}
			\includegraphics[width=3.2in]{./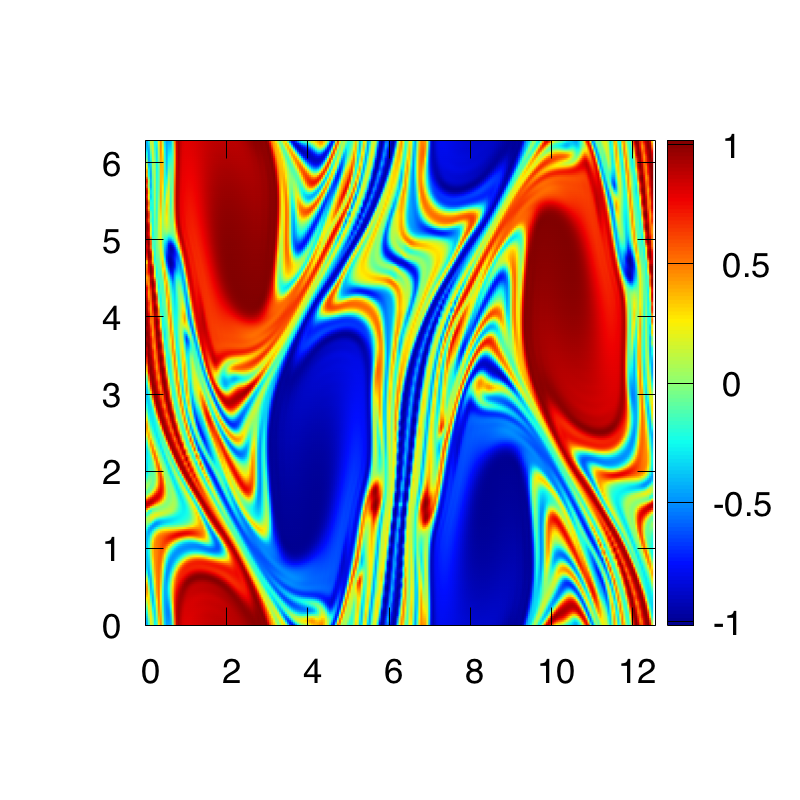}
		\end{center}
		\caption{Kelvin-Helmholtz instability problem at $T=40$. Mesh grid: $256\times256$.  Left: ``HLinear5"; Right: ``HLinear5 MPP".}
		\label{fig31}
	\end{figure}
	
	\begin{figure}
		\begin{center}
			\includegraphics[width=3.2in]{./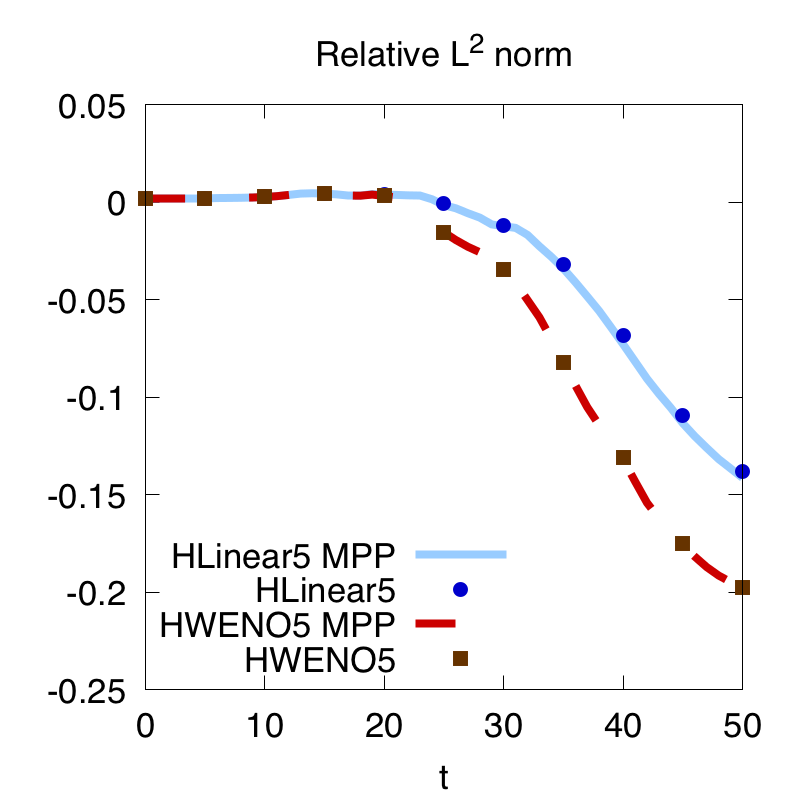}
			\includegraphics[width=3.2in]{./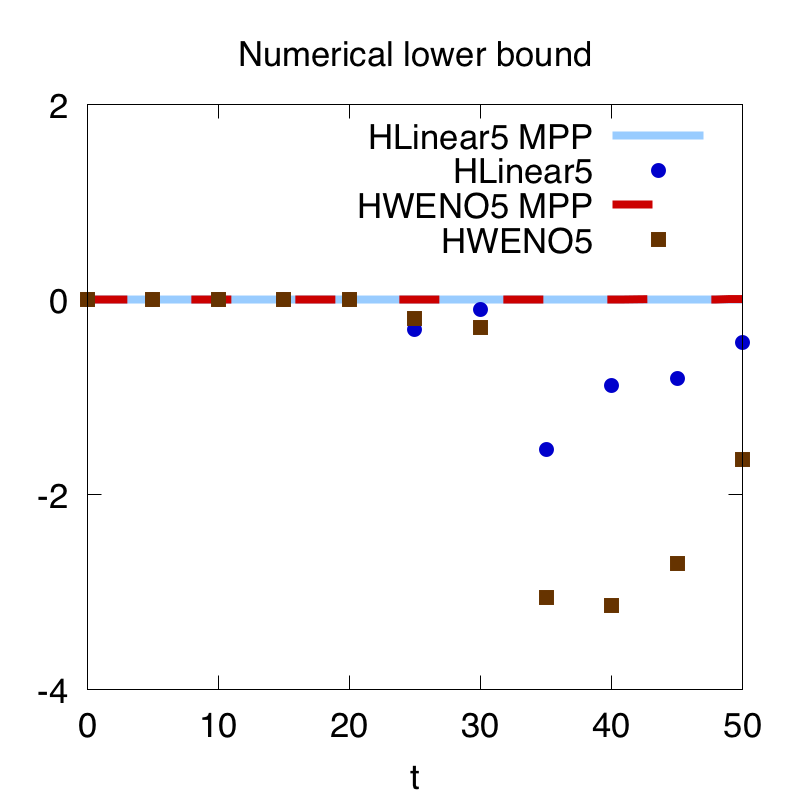}
		\end{center}
		\caption{Kelvin-Helmholtz instability problem. Mesh grid: $256\times256$. Left: relative $L^2$ norm; right: numerical minimum bound ($\rho_m+1.015$).}
		\label{fig32}
	\end{figure}
	
\end{exa}

\subsection{Incompressible Euler equations}
%The incompressible Euler equation is almost in the same form as the 2D guiding-center model problem,
%but with a different direction in the Poisson's equation and a different background, which reads
%\begin{equation}
%\frac{\partial \omega}{\partial t} + {\bf U} \cdot \nabla \omega = 0, \label{eq: inc}
%\end{equation}
%and
%\begin{equation}
%\Delta \Phi =\omega,  \label{eq: poisson2}
%\end{equation}
%where ${\bf U} = (-\partial_y \Phi, \partial_x \Phi)$ is the divergence free velocity for incompressibility.
\begin{exa}
	\label{ex67}(Accuracy test)
	We first consider the incompressible Euler system on the domain $[0, 2\pi]\times[0,2\pi]$ with an initial condition $\omega_0(x,y)=-2\sin(x)\sin(y)$. The exact solution will stay stationary with $\omega(x,y,t)=-2\sin(x)\sin(y)$,
	which is in the range of $[-2, 2]$. For this example, from Table \ref{tab2}, we can see that the numerical solution with and without the limiter all stay in the right range, which shows that the limiter does not destroy the high order accuracy.
	
	\begin{table}[h]
		\centering
		\caption{Example \ref{ex67}. $\omega_{min}$ and $\omega_{max}$ are the minimum and maximum numerical solutions respectively. ``WO'' stands for ``without limiter'', ``WL'' stands
			for ``with limiter''. $T=1$.}
		\vspace{0.2cm}
		\begin{tabular}{|c|c|c|c|c|c|c|c|}
			\hline
			& N  & $L^1$ error &    order   & $L^\infty$ error & order & $\omega_{max}$ & $\omega_{min}$ \\\hline
			\multirow{5}{*}{WO}
			&32 & 2.34e-05 &  --  & 4.89e-05 &  --  & 2 & -2   \\ \cline{2-8}
			&64 & 8.86e-07 & 4.72 & 1.63e-06 & 4.91 & 2 & -2 \\ \cline{2-8}
			&128 & 2.93e-08 & 4.92 & 5.08e-08 & 5.00 & 2 & -2 \\ \cline{2-8}
			&256 & 9.43e-10 & 4.96 & 1.60e-09 & 4.99 & 2 & -2  \\ \hline
			\multirow{5}{*}{WL}
			&32 & 2.34e-05 &  --  & 4.89e-05 &  --  & 2 & -2  \\ \cline{2-8}
			&64 & 8.86e-07 & 4.72 & 1.63e-06 & 4.91 & 2 & -2  \\ \cline{2-8}
			&128 & 2.93e-08 & 4.92 & 5.08e-08 & 5.00 & 2 & -2 \\ \cline{2-8}
			&256 & 9.43e-10 & 4.96 & 1.60e-09 & 4.99 & 2 & -2\\ \hline
		\end{tabular}
		\label{tab2}
	\end{table}
	
\end{exa}

\begin{exa} (Vortex patch).
	In this example, we consider the incompressible Euler equations for the vortex
	patch problem with the initial condition given by
	\begin{equation}
	\omega_0(x,y)=
	\begin{cases}
	-1, \qquad & \frac{\pi}{2} \le x \le \frac{3\pi}{2},  \frac{\pi}{4}\le y \le \frac{3\pi}{4}; \\
	1, \qquad & \frac{\pi}{2} \le x \le \frac{3\pi}{2}, \frac{5\pi}{4} \le y \le \frac{7\pi}{4}; \\
	0, \qquad & \text{otherwise}.
	\end{cases}
	\end{equation}
	We show the surface of $\omega$ at $T=10$ in Fig. \ref{fig41}. The mesh grid is $256\times256$. We can observe the good performance of the MPP flux limiter
	on this problem. We also show the time evolution of the relative difference of $L^2$ norm
	as compared to the initial data, and the minimum numerical solution on the bottom of Fig. \ref{fig41}.  Here we can still see that the linear scheme preserves the $L^2$ norm better than
	the WENO type scheme and the MPP limiter can eliminate the oscillations around two extreme values $1$ and $-1$ from the schemes without limiter. However, we would mention that for this example
	with discontinuous initial data, the MPP limiter cannot remove the small numerical oscillations around $0$ (figures are omitted here). In this case, the WENO will be needed to completely remove oscillations for middle discontinuities. 
	
	\begin{figure}
		\begin{center}
			\includegraphics[width=3.2in]{./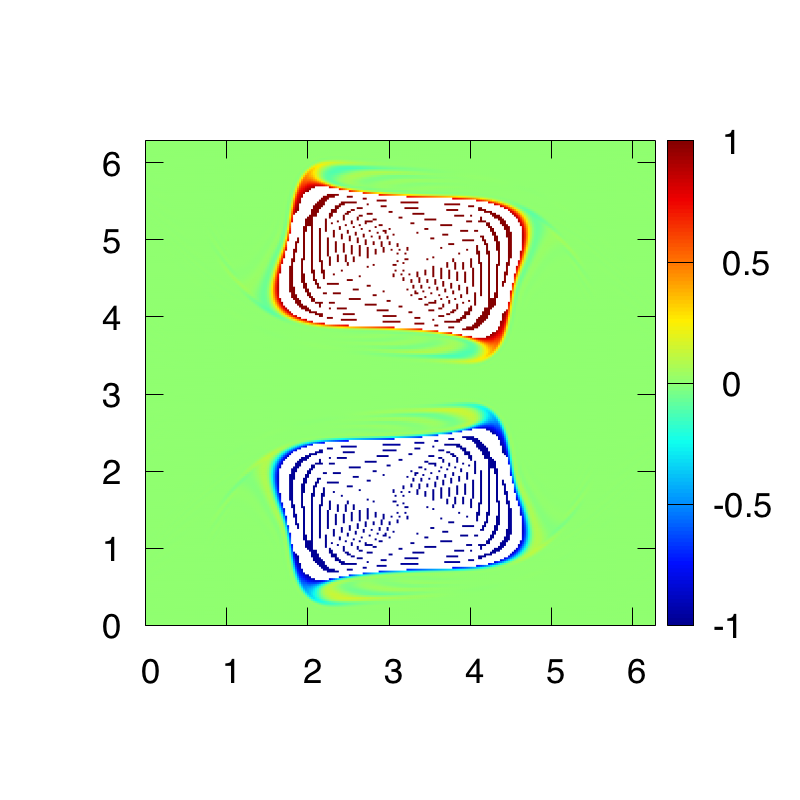}
			\includegraphics[width=3.2in]{./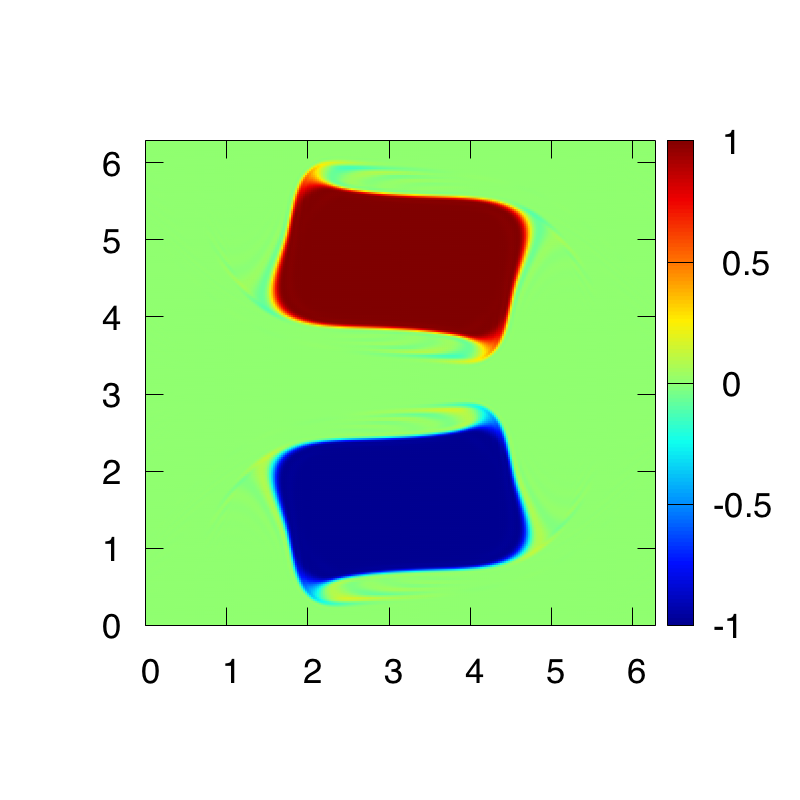}\\
			\includegraphics[width=3.2in]{./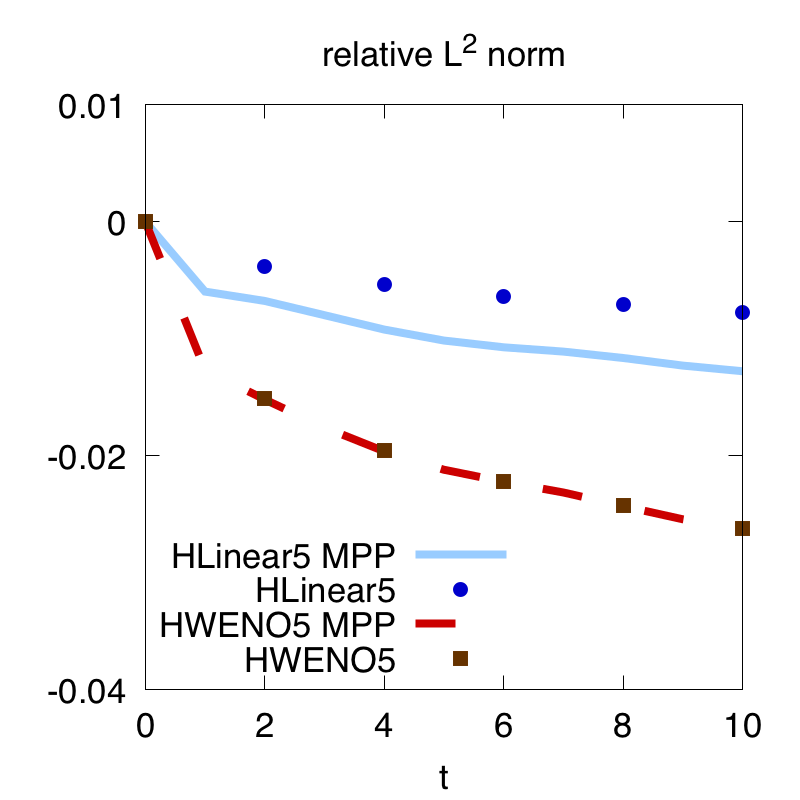}
			\includegraphics[width=3.2in]{./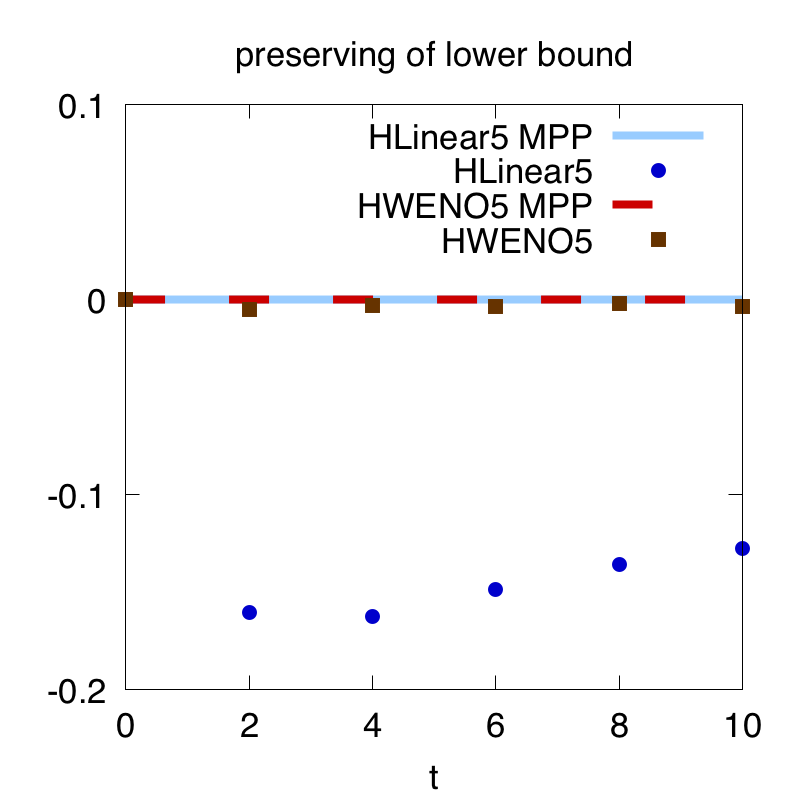}
		\end{center}
		\caption{The vortex patch problem. Mesh grid: $256\times256$. Top: surface of the vorticity $\omega$ at $T=10$ without and with MPP limiter on the left and right respectively. Bottom: the relative $L^2$ norm and the numerical minimum bound ( $\omega_m+1$). }
		\label{fig41}
	\end{figure}
	
\end{exa}
%
%\begin{exa} (Shear flow problem).
%This example is the same as above but with following
%initial conditions
%\begin{equation}
%\omega_0(x,y)=
%\begin{cases}
%\delta \cos(x)-\frac{1}{\rho}sech^2((y-\pi/2)/\rho)^2, \qquad &y\le \pi; \\
%\delta \cos(x)+\frac{1}{\rho}sech^2((3\pi/2-y)/\rho)^2,\qquad & y>\pi.
%\end{cases}
%\end{equation}
%where $\delta=0.05$ and $\rho=\frac{\pi}{15}$.
%We show the surface of $\omega$ at $T=6$ and $8$ in Fig. \ref{fig32}. The mesh size is $128\times128$.
%
%\begin{figure}
%\begin{center}
%\includegraphics[height=2.5in,width=3.0in]{./pic/incomp/shear6.png},
%\includegraphics[height=2.5in,width=3.0in]{./pic/incomp/shear8.png}
%\end{center}
%\caption{Shear flow problem. Mesh size $128\times128$. $T=6$ (left) and $T=8$ (right). }
%\label{fig32}
%\end{figure}
%
%\end{exa}

%%%%%%%%%%%%%%%%%%%
%
%%%%%%%%%%%%%%%%%%%
\section{Conclusion}
\label{sec6}
\setcounter{equation}{0}
\setcounter{figure}{0}
\setcounter{table}{0}

In this paper, we have proposed a linear scheme combined with a high order bound-preserving MPP flux limiter,
for solving incompressible flow problems. As compared to WENO type schemes, our approach
is less dissipative and much less costly, so that is much more efficient for high dimensional problems 
with long time simulations.  Applications to the Vlasov-Poisson system, 2D guiding-center model in
plasma physics, as well as the incompressible Euler equations in fluid hydrodynamics have demonstrated
the good performance of our proposed approach.

%%%%%%%%%%%%%%%%%%%
%
%%%%%%%%%%%%%%%%%%%

\section*{Acknowledgement}

%Francis Filbet and Eric Sonnendr\"ucker  were supported by the EUROfusion Consortium and has received funding
%from the Euratom research and training programme 2014-2018 under grant
%agreement No 633053. The views and opinions expressed herein do not
%necessarily reflect those of the European Commission.

T. Xiong would like to thank Dr. F. Filbet for many helpful discussions. T. Xiong acknowledges support from the Marie Sk{\l}odowska-Curie Individual Fellowships H2020-MSCA-IF-2014 of the European Commission, under the project HNSKMAP 654175. This work has also been supported by the Fundamental Research Funds for the Central Universities No. 20720160009, NSFC grant 11601455, NSAF grant U1630247 and NSF grant of Fujian Province 2016J05022. 

\bibliographystyle{siam}
\bibliography{refer}

%\begin{flushleft} \signFF \end{flushleft}
%%\vspace*{-44mm}
\begin{flushright} \signTX \end{flushright}
%%\vspace*{44mm}
%\begin{flushleft} \signES \end{flushleft}

\end{document}